\numberwithin{equation}{section}
\newtheorem{theorem}{Theorem}
\newtheorem{lemma}{Lemma}
\newtheorem{proposition}{Proposition}
\theoremstyle{definition}
\newtheorem{definition}{Definition}
\newtheorem{example}{Example}
\newtheorem{remark}{Remark}
\crefname{section}{Section}{Sections}
\crefname{appendix}{Appendix}{Appendices}
\crefname{theorem}{Theorem}{Theorems}
\crefname{lemma}{Lemma}{Lemmas}
\crefname{corollary}{Corollary}	{Corollaries}
\crefname{proposition}{Proposition}{Propositions}
\crefname{claim}{Claim}{Claims}
\crefname{conjecture}{Conjecture}{Conjectures}
\crefname{definition}{Definition}{Definitions}
\crefname{problem}{Problem}{Problems}
\crefname{example}{Example}{Examples}
\crefname{remark}{Remark}{Remarks}
\crefname{figure}{Figure}{Figures}
\crefname{footnote}{Footnote}{Footnotes}
\crefname{equation}{}{}
\crefname{enumi}{}{}
\newcommand{\R}{\mathbb{R}}
\newcommand{\N}{\mathbb{N}}
\newcommand{\ld}{,\ldots,}
\newcommand{\ep}{\varepsilon}
\newcommand{\wt}{\widetilde}
\newcommand{\prn}[1]{\left(#1\right)}
\newcommand{\norm}[1]{\left\|#1\right\|}
\newcommand{\abs}[1]{\left|#1\right|}
\newcommand{\D}{\displaystyle}
\newfont{\bg}{cmr9 scaled\magstep2}
\newcommand{\bigzerol}{\smash{\lower1.0ex\hbox{\bg 0}}}
\DeclareMathOperator*{\minimize}{minimize}
\begin{document}

\title[All unconstrained strongly convex problems are weakly simplicial]{All unconstrained strongly convex problems\\
are weakly simplicial}

\author{Yusuke \textsc{Mizota}}
\address{Faculty of Science and Engineering
Kyushu Sangyo University
3-1 Matsukadai, 2-Chome, Higashi-ku Fukuoka 813-8503 Japan
}
\email{mizota@ip.kyusan-u.ac.jp}

\author{Naoki \textsc{Hamada}}
\address{
Technology Expert Department,
Engineering Division,
KLab Inc.,
Tokyo 106-6122, Japan\\
RIKEN AIP-Fujitsu Collaboration Center,
RIKEN,
Tokyo 103-0027, Japan}
\email{hamada-n@klab.com}

\author{Shunsuke \textsc{Ichiki}}
\address{
Department of Mathematical and Computing Science,
School of Computing,
Tokyo Institute of Technology,
Tokyo 152-8552,
Japan}
\email{ichiki@c.titech.ac.jp}

\subjclass[2020]{90C25, 90C29}
\keywords{multi-objective optimization, strongly convex problem, weakly simplicial problem, sparse modeling}
\date{}
\begin{abstract}
A multi-objective optimization problem is $C^r$ weakly simplicial if there exists a $C^r$ surjection from a simplex onto the Pareto set/front such that the image of each subsimplex is the Pareto set/front of a subproblem, where $0\leq r\leq \infty$.
This property is helpful to compute a parametric-surface approximation of the entire Pareto set and Pareto front.
It is known that all unconstrained strongly convex $C^r$ problems are $C^{r-1}$ weakly simplicial for $1\leq r \leq \infty$.
In this paper, we show that all unconstrained strongly convex problems are $C^0$ weakly simplicial.
The usefulness of this theorem is demonstrated in a sparse modeling application: we reformulate the elastic net as a non-differentiable multi-objective strongly convex problem and approximate its Pareto set (the set of all trained models with different hyper-parameters) and Pareto front (the set of performance metrics of the trained models) by using a B\'ezier simplex fitting method, which accelerates hyper-parameter search.
\end{abstract}
\maketitle
\section{Introduction}\label{sec:intro}
Given a subset $X$ of a Euclidean space $\R^n$ and functions $f_1,\dots,f_m:X\to\R$, a multi-objective optimization problem of minimizing a mapping $f=(f_1,\dots,f_m):X\to\R^m$ is denoted by
\[
    \minimize_{x\in X} f(x)=(f_1(x),\dots,f_m(x)).
\]
The goal of this problem is to find (or approximate) the Pareto set, i.e., the set of all Pareto solutions, and the Pareto front, i.e., the image of the Pareto set under $f$.
Trade-off analysis and decision making in our real-life can be treated as a multi-objective optimization problem (e.g., manufacturing \cite{Wang2011}, optimal control \cite{Peitz2018}, and multi-agent systems \cite{Radulescu2019}).
Presenting the entire Pareto set and front rather than a single solution gives a global perspective on the trade-off of candidate solutions and enables us to make a better decision.

There is a problem class where one can efficiently compute a parametric-surface approximation of the entire Pareto set and front with a theoretical guarantee.
If a multi-objective optimization problem is $C^r$ weakly simplicial (see \cref{sec:main} for definition), then there exists a $C^r$ surjection from a simplex onto the Pareto set, as well as onto the Pareto front \cite{Hamada2019}.
Any continuous mapping from a simplex to a Euclidean space can be approximated by a B\'ezier simplex with an arbitrary accuracy \cite{Kobayashi2019}.
The asymptotic $L_2$-risk of B\'ezier simplex approximation has been evaluated \cite{Tanaka2020}.
For $1\le r\le\infty$, all unconstrained strongly convex $C^r$ problems are $C^{r-1}$ weakly simplicial, where the solution of weighted sum scalarization problems gives a $C^{r-1}$ surjection from a simplex of all possible weight vectors onto the Pareto set and front \cite{Hamada2019,Hamada2019b}.
By approximating this surjection with a B\'ezier simplex, one can obtain a parametric-surface approximation of the Pareto set and front of any $C^1$ strongly convex problem.

However, there are many important problems that are strongly convex but non-differentiable.
One motivating example appears in the context of sparse modeling.
The elastic net \cite{Zou2005} is a linear regression method that minimizes a weighted sum of the ordinary least square error, the $L_1$-regularization, and the $L_2$-regularization.
Its hyper-parameter search, i.e., finding the preferred model among all candidate models trained with different weighting coefficients, can be written as a strongly convex non-differentiable multi-objective optimization problem.
If this problem is weakly simplicial, then a B\'ezier simplex can be built from training results of the elastic net in order to approximate the correspondence from a simplex of all hyper-parameters onto the Pareto set of all trained models and the Pareto front of all performance metrics.
Such an approximation not only reduces the number of training trials of the elastic net but also provides geometric insights to select the preferred model.

In this paper, we show all unconstrained strongly convex problems are $C^0$ weakly simplicial.
In \cref{sec:main}, we present some definitions and the main result (\cref{thm:main}).
By lemmas prepared in \cref{sec:pre}, we prove \cref{thm:main} in \cref{sec:mainproof}.
In \cref{sec:application}, \cref{thm:main} is applied to a hyper-parameter search problem of the elastic net: multi-objective formulation of the problem and B\'ezier simplex approximation are presented with numerical simulation.
We conclude the paper in \cref{sec:conclusions}.

\section{Main result}\label{sec:main}
Throughout this paper, $m$ and $n$ are positive integers, and we denote the index set $\set{1\ld m}$ by $M$.

We consider the problem of optimizing several functions simultaneously.
More precisely, let $f: X \to \R^m$ be a mapping, where $X$ is a given arbitrary set.
A point $x \in X$ is called a \emph{Pareto solution} of $f$ if there does not exist another point $y \in X$ such that $f_i(y) \leq f_i(x)$ for all $i \in M$ and $f_j(y) < f_j(x)$ for at least one index $j \in M$.
We denote the set consisting of all Pareto solutions of $f$ by $X^*(f)$, which is called the \emph{Pareto set} of $f$.
The set $f(X^*(f))$ is called the \emph{Pareto front} of $f$.
The problem of determining $X^*(f)$ is called the \emph{problem of minimizing $f$}.

Let $f=(f_1\ld f_m): X \to \R^m$ be a mapping, where $X$ is a given arbitrary set.
For a non-empty subset $I=\set{i_1\ld i_k}$ of $M$ such that $i_1 < \dots < i_k$, set
\begin{align*}
    f_I=(f_{i_1}\ld f_{i_k}).
\end{align*}
The problem of determining $X^*(f_I)$ is called a \emph{subproblem} of the problem of minimizing $f$.
Set
\begin{align*}
    \Delta^{m - 1} &= \Set{(w_1, \dots, w_m) \in \R^m | \sum_{i = 1}^m w_i = 1,\ w_i \geq 0}.
\end{align*}
We also denote a face of $\Delta^{m - 1}$ for a non-empty subset $I$ of $M$ by
\begin{align*}
    \Delta_I = \set{(w_1, \dots, w_m) \in \Delta^{m - 1} | w_i = 0\ (i \not \in I)}.
\end{align*}
In this paper, $r$ is a non-negative integer or $r=\infty$. 
For a $C^r$ manifold $N$ (possibly with corners) and a subset $V$ of $\R^\ell$, a mapping $g:N\to V$ is called a \emph{$C^r$ mapping} (resp., a  \emph{$C^r$ diffeomorphism}) if $g:N\to \R^\ell$ is of class $C^r$ (resp.,  $g:N\to \R^\ell$ is a $C^r$ immersion and $g:N\to V$ is a homeomorphism), where $r\geq 1$.
In this paper, $C^0$ mappings and $C^0$ diffeomorphisms are continuous mappings and homeomorphisms, respectively.

By referring to \cite{Hamada2019}, we give the definition of (weakly) simplicial problems in this paper.
\begin{definition}\label{def:simplicial}
    Let $f = (f_1\ld f_m): X \to \R^m$ be a mapping, where $X$ is a subset of $\R^n$.
    The problem of minimizing $f$ is $C^r$ \emph{simplicial} if there exists a $C^r$ mapping $\Phi: \Delta^{m - 1} \to X^*(f)$ such that both the mappings $\Phi|_{\Delta_I}: \Delta_I \to X^*(f_I)$ and $f|_{X^*(f_I)}: X^*(f_I) \to f(X^*(f_I))$ are $C^r$ diffeomorphisms for any non-empty subset $I$ of $M$, where $0\leq r\leq \infty$.
    The problem of minimizing $f$ is $C^r$ \emph{weakly simplicial}\footnote{
        In \cite{Hamada2019}, the problem of minimizing $f:X\to \R^m$ is said to be $C^r$ \emph{weakly simplicial} if there exists a $C^r$ mapping $\phi: \Delta^{m - 1} \to f(X^*(f))$ satisfying $\phi(\Delta_I) = f(X^*(f_I))$ for any non-empty subset $I$ of $M$.
        On the other hand, a surjective mapping of $\Delta^{m-1}$ into $X^*(f)$ is important to describe $X^*(f)$.
        Hence, the definition of weak simpliciality in this paper is updated from that in \cite{Hamada2019}.
        \label{ftn:weak}
    } if there exists a $C^r$ mapping $\phi: \Delta^{m - 1} \to X^*(f)$ such that $\phi(\Delta_I) = X^*(f_I)$ for any non-empty subset $I$ of $M$,  where $0\leq r\leq \infty$.
\end{definition}

A subset $X$ of $\R^n$ is \emph{convex} if $t x + (1 - t) y \in X$ for all $x, y \in X$ and all $t \in [0, 1]$.
Let $X$ be a convex set in $\R^n$.
A function $f: X \to \R$ is \emph{strongly convex} if there exists $\alpha > 0$ such that
\begin{align*}
    f(t x + (1 - t) y) \leq t f(x) + (1 - t) f(y) - \frac{1}{2} \alpha t (1 - t) \norm{x - y}^2
\end{align*}
for all $x, y \in X$ and all $t \in [0, 1]$, where $\norm{z}$ is the Euclidean norm of $z \in \R^n$.
The constant $\alpha$ is called a \emph{convexity parameter} of the function $f$.
A mapping $f = (f_1\ld f_m): X \to \R^m$ is \emph{strongly convex} if $f_i$ is strongly convex for any $i \in M$.
The problem of minimizing a strongly convex $C^r$ mapping is called the \emph{strongly convex $C^r$ problem}.

\begin{theorem}[\cite{Hamada2019,Hamada2019b}]\label{thm:main_d}
    Let $f: \R^n \to \R^m$ be a strongly convex $C^r$ mapping, where $1\leq r\leq \infty$.
   Then, the problem of minimizing $f$ is $C^{r-1}$ weakly simplicial.
\end{theorem}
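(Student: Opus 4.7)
The plan is to parametrize the Pareto set by weighted-sum scalarization. Set
\[
    F(x,w)=\sum_{i=1}^m w_i f_i(x),\qquad (x,w)\in\R^n\times\Delta^{m-1}.
\]
Let $\alpha>0$ be a common lower bound for the convexity parameters of $f_1,\dots,f_m$. For every $w\in\Delta^{m-1}$ one has $\sum_{i=1}^m w_i=1$, so $F(\cdot,w)$ is strongly convex with parameter $\alpha$ and is therefore coercive with a unique minimizer. Define $\phi(w)$ to be this minimizer. The goal is to show that (i) $\phi(\Delta_I)=X^*(f_I)$ for every nonempty $I\subseteq M$ and (ii) $\phi:\Delta^{m-1}\to\R^n$ is of class $C^{r-1}$.

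For (i), the inclusion $\phi(\Delta_I)\subseteq X^*(f_I)$ follows from unique minimality: if $w\in\Delta_I$ and some $y\in\R^n$ Pareto-dominated $\phi(w)$ with respect to $f_I$, then $\sum_{i\in I}w_i f_i(y)\leq\sum_{i\in I}w_i f_i(\phi(w))$, contradicting that $\phi(w)$ is the unique minimizer of the strongly convex function $F(\cdot,w)$. For the reverse inclusion, given $x^*\in X^*(f_I)$ I would apply the supporting hyperplane theorem to the convex set $f_I(\R^n)+\R^{|I|}_{\geq 0}\subseteq\R^{|I|}$ at the boundary point $f_I(x^*)$; this produces nonnegative weights summing to one, which, placed in the coordinates indexed by $I$ and zero elsewhere, give a point $w\in\Delta_I$ for which $x^*$ minimizes $F(\cdot,w)$, so $x^*=\phi(w)$.

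For (ii), when $r\geq 2$ the equation $\nabla_x F(\phi(w),w)=0$ together with positive-definiteness of $\nabla_x^2 F(\phi(w),w)\succeq \alpha I_n$ lets me apply the implicit function theorem (to the $C^{r-1}$ map $\nabla_x F$) and conclude $\phi\in C^{r-1}$. The main obstacle is the borderline case $r=1$: then $\nabla_x F$ is only continuous, not $C^1$, so the standard implicit function theorem does not directly yield $\phi\in C^0$. Instead I would argue continuity of $\phi$ directly from the $\arg\min$ viewpoint: strong convexity with modulus $\alpha$ uniform in $w$ pins $\phi(w)$ inside a bounded region, and for any sequence $w_k\to w^*$ each accumulation point of $\phi(w_k)$ must itself minimize $F(\cdot,w^*)$ (by continuity of $F$ in both arguments) and hence coincide with $\phi(w^*)$ by uniqueness. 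This upgrades $\phi$ to a continuous map on $\Delta^{m-1}$ and completes the proof.
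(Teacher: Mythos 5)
Your proposal is essentially sound, but note that the paper does not prove \cref{thm:main_d} itself; it imports it from \cite{Hamada2019,Hamada2019b} and only develops in detail the non-differentiable analogue (\cref{thm:main} via \cref{thm:contisurj}). Measured against that material, your argument follows the same backbone: the weighted-sum scalarization map $\phi(w)=\arg\min_x\sum_i w_if_i(x)$ is exactly the paper's $x^*$ in \cref{eq:map}; your forward inclusion $\phi(\Delta_I)\subseteq X^*(f_I)$ is \cref{thm:sufficient}, and your supporting-hyperplane argument for the reverse inclusion is precisely the content of \cref{thm:necessary} (Miettinen), which the paper cites rather than reproves. Your use of the implicit function theorem for $r\geq 2$ matches the route of the cited references (the paper explicitly remarks that differentiability is essential to that method). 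The one place you genuinely diverge is the regularity argument in the borderline case $r=1$: you obtain continuity of $\phi$ by a soft compactness/accumulation-point argument (every limit point of $\phi(w_k)$ minimizes $F(\cdot,w^*)$, hence equals $\phi(w^*)$ by uniqueness), whereas the paper derives a quantitative H\"older-$\tfrac12$ modulus of continuity, $\norm{x^*(w)-x^*(\wt w)}\leq\sqrt{(K_0/\alpha_0)\sum_i\abs{w_i-\wt w_i}}$ (\cref{thm:ine}), from the strong-convexity inequality of \cref{thm:inequ}. Both are valid; yours is shorter and suffices for the stated theorem, while the paper's gives an explicit modulus that is reusable. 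Two small points you should not leave as assertions: the claim that ``strong convexity pins $\phi(w)$ inside a bounded region'' uniformly in $w$ does require a proof (the paper spends \cref{thm:compa,thm:inequ,thm:compact} on exactly this), and in your domination argument you should treat the equality case $\sum_{i\in I}w_if_i(y)=\sum_{i\in I}w_if_i(\phi(w))$ separately, since uniqueness of the minimizer then forces $y=\phi(w)$, which contradicts the strict inequality in one component. Neither gap is hard to close.
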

As a practical application described in this paper, it is sufficient to consider weak simpliciality (for details, see \cref{sec:application}).
For the readers' convenience, we give the following remark on simpliciality.
\begin{remark}
    In \cref{thm:main_d}, if the rank of the differential $df_x$ is equal to $m - 1$ for any $x \in X^*(f)$, then the problem of minimizing $f$ is 
    $C^{r-1}$ simplicial (for details, see \cite{Hamada2019,Hamada2019b}).
\end{remark}
As in \cite{Hamada2019,Hamada2019b}, the differentiability is essential for the proof of \cref{thm:main_d}.
Namely, the method of the proof of \cref{thm:main_d} does not work if we omit the differentiability.
However, the following theorem asserts that all unconstrained strongly convex problems are $C^0$ weakly simplicial.
\begin{theorem}\label{thm:main}
    Let $f: \R^n \to \R^m$ be a strongly convex mapping.
    Then, the problem of minimizing $f$ is $C^0$ weakly simplicial.
\end{theorem}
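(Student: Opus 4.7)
The plan is to define $\phi$ by weighted-sum scalarization: for $w=(w_1,\ldots,w_m)\in\Delta^{m-1}$, set $g_w:=\sum_{i=1}^{m}w_if_i$ and $\phi(w):=\arg\min_{x\in\R^n}g_w(x)$. A convex function on all of $\R^n$ is automatically continuous, so each $f_i$ is continuous; writing $\alpha_i>0$ for a convexity parameter of $f_i$ and $\alpha:=\min_i\alpha_i$, the function $g_w$ is strongly convex with parameter $\sum_i w_i\alpha_i\geq\alpha$ for every $w\in\Delta^{m-1}$, hence has a unique global minimizer. The inclusion $\phi(\Delta_I)\subseteq X^*(f_I)$ then follows from a short uniqueness argument: if $w\in\Delta_I$ and $y$ dominates $\phi(w)$ in $f_I$ strictly at some coordinate $j\in I$, then either $w_j>0$ forces $g_w(y)<g_w(\phi(w))$, or $w_j=0$ combined with $g_w(y)\leq g_w(\phi(w))$ and uniqueness forces $y=\phi(w)$; both alternatives contradict the strict inequality at $j$.

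The first substantive step is to verify that $\phi:\Delta^{m-1}\to\R^n$ is continuous. I would begin by establishing a uniform coercivity estimate for the family $\{g_w\}_{w\in\Delta^{m-1}}$. Using the quadratic minorant $f_i(x)\geq f_i(x_i^*)+\tfrac{\alpha_i}{2}\|x-x_i^*\|^2$, where $x_i^*$ is the unique minimizer of $f_i$, together with $\min_i\|x-x_i^*\|^2\geq(\|x\|-\max_i\|x_i^*\|)^2$ for $\|x\|$ large, one obtains a lower bound of the form $g_w(x)\geq C+\tfrac{\alpha}{8}\|x\|^2$ that is independent of $w$. Given $w_n\to w$, this estimate and $g_{w_n}(\phi(w_n))\leq g_{w_n}(0)=\sum_i w_{n,i}f_i(0)$ show that $\{\phi(w_n)\}$ is bounded. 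Any subsequential limit $\bar x$ of $\phi(w_n)$ satisfies $g_w(\bar x)\leq g_w(y)$ for every $y\in\R^n$ by passing to the limit in $g_{w_{n_k}}(\phi(w_{n_k}))\leq g_{w_{n_k}}(y)$, using joint continuity of $(w,x)\mapsto g_w(x)$. Uniqueness of the minimizer of $g_w$ then forces $\bar x=\phi(w)$, so the whole sequence converges.

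The remaining inclusion $X^*(f_I)\subseteq\phi(\Delta_I)$ is a classical convex scalarization. Given $x^*\in X^*(f_I)$, the convex set $A:=f_I(\R^n)+\R^{|I|}_{\geq 0}$ and the open convex set $B:=f_I(x^*)-\R^{|I|}_{>0}$ are disjoint, since any common point would provide a $y$ with $f_I(y)<f_I(x^*)$ componentwise, contradicting the weak Pareto optimality implied by $x^*\in X^*(f_I)$. A separating hyperplane argument yields a non-zero $w'\in\R^{|I|}$, which the recession cone $\R^{|I|}_{\geq 0}$ of $A$ forces to satisfy $w'\geq 0$, such that $\langle w',f_I(y)\rangle\geq\langle w',f_I(x^*)\rangle$ for all $y\in\R^n$. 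Extending $w'$ by zero on $M\setminus I$ and normalizing produces $w\in\Delta_I$ for which $x^*$ minimizes $g_w$; by uniqueness, $x^*=\phi(w)$.

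The main obstacle I expect is the continuity step: the $C^r$ version of the theorem can rely on implicit-function-theorem style arguments applied to the stationarity condition $\sum_i w_i\nabla f_i(x)=0$, and such tools are unavailable here. Consequently continuity must be extracted purely from strong convexity, and the essential ingredient is the uniform lower bound $\sum_i w_i\alpha_i\geq\min_i\alpha_i>0$ on the convexity parameter of $g_w$, which supplies the uniform coercivity needed to keep $\{\phi(w_n)\}$ in a compact set even as $w_n$ approaches a face of $\Delta^{m-1}$.
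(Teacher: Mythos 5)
Your proposal is correct, and its overall skeleton coincides with the paper's: both define the map by weighted-sum scalarization, note that $\sum_i w_i f_i$ is strongly convex with parameter $\sum_i w_i\alpha_i\geq\min_i\alpha_i>0$ and hence has a unique minimizer lying in $X^*(f)$, and obtain surjectivity onto each $X^*(f_I)$ from the convex scalarization theorem (which the paper simply cites from Miettinen, whereas you re-derive it by separating $f_I(\R^n)+\R^{|I|}_{\geq 0}$ from $f_I(x^*)-\R^{|I|}_{>0}$). The genuine divergence is in the continuity step, which is the crux of the theorem in the absence of differentiability. The paper proves a quantitative H\"older-type estimate (its Lemma on $x^*$): adding the two quadratic-growth inequalities $g_w(x^*(w))+\frac{1}{2}\bigl(\sum_i w_i\alpha_i\bigr)\norm{x^*(\wt w)-x^*(w)}^2\leq g_w(x^*(\wt w))$ and its counterpart with $w$ and $\wt w$ exchanged yields
\begin{align*}
\norm{x^*(w)-x^*(\wt w)}\leq\sqrt{\frac{K_0}{\alpha_0}\sum_{i=1}^m\abs{w_i-\wt w_i}},
\end{align*}
where $K_0$ bounds the oscillation of the $f_i$ on the Pareto set; this requires first establishing that $X^*(f)$ is compact so that $K_0$ is finite. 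You instead run a soft argument: a $w$-uniform coercivity bound keeps $\{\phi(w_n)\}$ in a compact set, and a subsequence-plus-uniqueness argument using joint continuity of $(w,x)\mapsto\sum_i w_i f_i(x)$ identifies every subsequential limit with $\phi(w)$. Both are valid; your route is somewhat more elementary and avoids the separate compactness lemma for $X^*(f)$, while the paper's route buys an explicit $1/2$-H\"older modulus of continuity for the solution map, which is stronger information than mere continuity and is of independent interest for the approximation application.
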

\begin{remark}\label{rem:main}
    Note that (strict) convexity of a mapping does not necessarily imply that the problem is $C^0$ weakly simplicial.
    For example, the problem of minimizing $f:\R\to \R$ defined by $f(x)=e^x$ does not have a Pareto solution (i.e.~a minimizer).
Thus, it is not $C^0$ weakly simplicial although $f$ is strictly convex.
\end{remark}

\section{Preliminaries for the proof of \texorpdfstring{\cref{thm:main}}{Theorem 2}}\label{sec:pre}
In this section, we prepare some lemmas for the proof of \cref{thm:main}.
The following lemma follows from \cite[Theorem~3.1.8 (p.~121)]{Nesterov2004}.
\begin{lemma}\label{thm:convexconti}
    If a function $f: \R^n \to \R$ is strongly convex, then $f$ is continuous.
\end{lemma}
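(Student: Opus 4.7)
The plan is to reduce the assertion to the classical fact that every convex function defined on all of $\R^n$ is continuous, since strong convexity with parameter $\alpha>0$ immediately implies ordinary convexity (the correction term $-\tfrac{1}{2}\alpha t(1-t)\norm{x-y}^2$ is non-positive). Hence it suffices to prove continuity of an arbitrary convex $f\colon\R^n\to\R$ at a fixed but arbitrary point $x_0\in\R^n$.

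First I would obtain a local upper bound. Choose a small closed cube $Q=x_0+[-\delta,\delta]^n$ with vertex set $V$, so $\card{V}=2^n$, and set $M=\max_{v\in V} f(v)$, which is finite because $f$ is real-valued everywhere. Every $y\in Q$ can be written as a convex combination of the points of $V$, so an inductive application of the definition of convexity (equivalently, Jensen's inequality on a finite set) yields $f(y)\leq M$ for every $y\in Q$.

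Next I would extract a local lower bound and upgrade boundedness to continuity. For $y$ sufficiently close to $x_0$ the reflected point $2x_0-y$ still lies in $Q$, so the midpoint identity $x_0=\tfrac{1}{2}y+\tfrac{1}{2}(2x_0-y)$ combined with convexity gives $f(x_0)\leq \tfrac12 f(y)+\tfrac12 f(2x_0-y)\leq \tfrac12 f(y)+\tfrac12 M$, whence $f(y)\geq 2f(x_0)-M$. Thus $f$ is locally bounded near $x_0$. The standard three-point (secant slope) argument for convex functions then converts local boundedness into a local Lipschitz estimate $\abs{f(y)-f(x_0)}\leq C\norm{y-x_0}$ on a smaller ball around $x_0$, which in particular yields continuity at $x_0$.

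The main obstacle is the passage from local boundedness to local Lipschitz continuity: the upper bound on $Q$ is immediate from Jensen, and the lower bound is a one-line reflection trick, but the Lipschitz step requires carefully applying the monotonicity of convex difference quotients along rays emanating from $x_0$ up to the boundary of a slightly enlarged ball contained in $Q$. This is classical and is precisely the content of \cite[Theorem~3.1.8]{Nesterov2004}, so the proof can be concluded by citation. Observe that neither differentiability of $f$ nor the specific value of the convexity parameter $\alpha$ enters the argument beyond providing convexity, so the conclusion applies verbatim to the non-smooth strongly convex functions motivating \cref{thm:main}.
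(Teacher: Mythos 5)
Your argument is correct and follows essentially the same route as the paper, which simply derives the lemma from the classical continuity of finite-valued convex functions on $\R^n$ by citing \cite[Theorem~3.1.8]{Nesterov2004}; your reduction from strong convexity to convexity and your sketch of the cube/reflection/secant-slope argument are all standard and sound. No issues.
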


We give the following two lemmas (\cref{thm:sufficient,thm:necessary}) in \cite{Miettinen1999}.
\begin{lemma}[{\cite[Theorem~3.1.3~in~Part~I\hspace{-.1em}I~(p.~79)]{Miettinen1999}}]\label{thm:sufficient}
    Let $f = (f_1, \dots, f_m): \R^n \to \R^m$ be a mapping and let $(w_1, \dots, w_m) \in \Delta^{m - 1}$.
    If $x \in \R^n$ is the unique minimizer of the function $\sum_{i = 1}^m w_i f_i$, then $x \in X^*(f)$.
\end{lemma}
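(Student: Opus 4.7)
The plan is a short proof by contradiction. I will suppose that $x \in \R^n$ is the unique minimizer of the scalarized function $g := \sum_{i=1}^m w_i f_i$, yet $x \notin X^*(f)$. By the definition of the Pareto set given in \cref{sec:main}, the latter assumption furnishes some $y \in \R^n$ satisfying $f_i(y) \leq f_i(x)$ for every $i \in M$ and $f_j(y) < f_j(x)$ for at least one index $j \in M$. In particular $f(y) \neq f(x)$, so necessarily $y \neq x$.

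The next step is to compare the values $g(y)$ and $g(x)$ via
\[
g(y) - g(x) = \sum_{i=1}^m w_i\bigl(f_i(y) - f_i(x)\bigr).
\]
Each weight $w_i$ is non-negative because $(w_1,\ldots,w_m)\in \Delta^{m-1}$, and every difference $f_i(y)-f_i(x)$ is non-positive by the Pareto-domination relation above. Hence every summand is non-positive, so $g(y) \leq g(x)$. Combined with the fact that $x$ is a minimizer of $g$, this forces the equality $g(y) = g(x)$, meaning that $y$ is also a minimizer of $g$. Since $y \neq x$, this contradicts the hypothesized uniqueness of the minimizer, completing the argument.

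No serious obstacle arises; the statement is a direct consequence of the definition of Pareto optimality and the non-negativity of the weights. The only point requiring care is the essential use of \emph{uniqueness} of the minimizer, as opposed to mere minimality. Without uniqueness, the argument would break down precisely in the case where $w_j = 0$ for the index $j$ at which the strict inequality $f_j(y) < f_j(x)$ occurs: one would still get $g(y) \leq g(x)$, but not strict inequality, so the contradiction can only be extracted by noting that $y$ gives a second minimizer distinct from $x$.
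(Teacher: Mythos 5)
Your proof is correct; the paper itself gives no proof of this lemma but simply cites Miettinen's Theorem~3.1.3, and your argument is precisely the standard one from that reference (non-negativity of the weights gives $g(y)\leq g(x)$, and uniqueness of the minimizer supplies the contradiction via $y\neq x$). Your closing remark correctly identifies why uniqueness, rather than mere minimality, is the essential hypothesis when the strictly improving index $j$ may have $w_j=0$.
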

Let $X$ be a convex subset of $\R^n$.
A function $f: X \to \R$ is said to be \emph{convex} if
\begin{align*}
    f(t x + (1 - t) y) \leq t f(x) + (1 - t) f(y)
\end{align*}
for all $x, y \in X$ and all $t \in [0, 1]$.
A mapping $f = (f_1\ld f_m): X \to \R^m$ is \emph{convex} if $f_i$ is convex for any $i \in M$.
\begin{lemma}[{\cite[Theorem~3.1.4~in~Part~I\hspace{-.1em}I~(p.~79)]{Miettinen1999}}]\label{thm:necessary}
    Let $f = (f_1, \dots, f_m): \R^n \to \R^m$ be a convex mapping.
    If $x \in X^*(f)$, then there exists some $(w_1, \dots, w_m) \in \Delta^{m - 1}$ such that $x$ is a minimizer of $\sum_{i = 1}^m w_i f_i$.
\end{lemma}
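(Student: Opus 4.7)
The plan is to derive the weight vector from a separation argument in $\R^m$ between the upward-closed hull of the image and the open orthant below $f(x)$. Concretely, I would introduce
\[
C = f(\R^n) + \R^m_{+} = \Set{z \in \R^m | z_i \geq f_i(y)\ \text{for all}\ i \in M,\ \text{for some}\ y\in\R^n},
\]
and check that $C$ is convex: given $z_1,z_2\in C$ with witnesses $y_1,y_2\in\R^n$, the componentwise chain $t z_1+(1-t) z_2 \ge t f(y_1)+(1-t) f(y_2) \ge f(t y_1+(1-t) y_2)$ (valid because each $f_i$ is convex) places $t z_1+(1-t) z_2$ in $C$ for every $t\in[0,1]$. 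Alongside $C$, take the open convex set
\[
D = \Set{z\in\R^m | z_i < f_i(x)\ \text{for all}\ i\in M}.
\]
The Pareto assumption on $x$ forces $C\cap D=\emptyset$, since any $z\in C\cap D$ would come with $y\in\R^n$ obeying $f_i(y)\le z_i < f_i(x)$ for every $i$, and such a $y$ would strictly dominate $x$ in every coordinate, contradicting $x\in X^*(f)$.

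Because $D$ is open, convex, and non-empty, and disjoint from the non-empty convex set $C$, the separating hyperplane theorem produces $w=(w_1,\ldots,w_m)\in\R^m\setminus\set{0}$ and $\alpha\in\R$ with $w\cdot z\le\alpha\le w\cdot c$ for all $z\in D$ and all $c\in C$. Two short limit arguments then finish the job. From $f(x)\in C$ we have $w\cdot f(x)\ge\alpha$; on the other hand, plugging $z_\delta=f(x)-\delta(1,\ldots,1)\in D$ into the separation and sending $\delta\to 0^+$ gives $w\cdot f(x)\le\alpha$, so $w\cdot f(x)=\alpha$. Non-negativity of the weights follows by the same trick with a coordinate perturbation: for each fixed $i\in M$ and $\delta,\epsilon>0$ the point $f(x)-\delta(1,\ldots,1)-\epsilon e_i$ belongs to $D$ (where $e_i$ is the $i$-th standard basis vector of $\R^m$), so the inequality $\alpha-\delta\sum_j w_j-\epsilon w_i\le\alpha$ together with $\delta\to 0^+$ yields $w_i\ge 0$. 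Since $w\ne 0$ and $w\ge 0$, we have $\sum_i w_i>0$, and rescaling $w$ by $1/\sum_i w_i$ keeps the hyperplane and places the weights in $\Delta^{m-1}$. Finally, for any $y\in\R^n$ the inclusion $f(y)\in C$ gives $w\cdot f(y)\ge\alpha=w\cdot f(x)$, i.e.\ $x$ minimizes $\sum_{i=1}^m w_i f_i$, as required.

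The main obstacle I anticipate is the bookkeeping around the separation, specifically extracting non-negativity of $w$ from the geometry, since the raw separating hyperplane theorem merely produces some non-zero normal vector. The ``slack direction'' device (adding $-\delta(1,\ldots,1)$ and $-\epsilon e_i$ to $f(x)$) is what pushes $w$ into the non-negative orthant, and the openness of $D$ is crucial both for the theorem to apply and for these limits to be legitimate. Note that strong convexity plays no role here: only plain convexity of $f$ and only the weak form of the Pareto property (no $y$ with $f(y)<f(x)$ in every coordinate) are used.
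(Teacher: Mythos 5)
Your argument is correct: the upper set $C=f(\R^n)+\R^m_{+}$ is convex by the componentwise convexity chain you give, it is disjoint from the open set $D$ by (weak) Pareto optimality of $x$, and the separation together with the slack-direction limits legitimately yields $w\geq 0$, $w\cdot f(x)=\alpha$, and $w\cdot f(y)\geq w\cdot f(x)$ for all $y\in\R^n$, so after normalization $x$ minimizes $\sum_{i=1}^m w_i f_i$. Note only that the paper itself offers no proof of this lemma --- it is imported directly from Miettinen's book --- and your separation-of-convex-sets argument is exactly the standard proof of that cited result, so there is no genuinely different route to compare.
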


Now, we prepare the following three lemmas (\cref{thm:compa,thm:minimum,thm:inequ}) on strongly convex functions without differentiability.
\begin{lemma}\label{thm:compa}
    Let $f: \R^n \to \R$ be a strongly convex function. Then, $f^{-1}((-\infty, f(0)])$ is compact, where $0$ is the origin of $\R^n$ and $(-\infty, f(0)]=\set{y\in\R|y\leq f(0)}$.
\end{lemma}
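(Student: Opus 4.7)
The plan is to verify that $S := f^{-1}((-\infty, f(0)])$ is both closed and bounded in $\R^n$, and then invoke the Heine--Borel theorem. Closedness is immediate from \cref{thm:convexconti}: since $f$ is continuous, $S$ is the preimage of a closed half-line under a continuous function.

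The substantive step is boundedness, which I would deduce from coercivity of $f$, i.e., from $f(x)\to\infty$ as $\norm{x}\to\infty$. Strong convexity is exploited through a carefully chosen convex combination. Given $x\in\R^n$ with $T:=\norm{x}\ge 1$, put $u:=x/T$, so $u$ lies on the unit sphere and $u=(1/T)\,x+(1-1/T)\cdot 0$. Applying the strong convexity inequality to $y=x$, $z=0$ with parameter $t=1/T$ and convexity constant $\alpha>0$, then multiplying by $T$ and rearranging, yields the lower bound
\begin{align*}
    f(x)\;\ge\;Tf(u)-(T-1)f(0)+\frac{\alpha}{2}\,T(T-1).
\end{align*}
By \cref{thm:convexconti}, $f$ is continuous on the compact unit sphere, so there exists a constant $C$ with $f(u)\ge -C$ for every unit vector $u$. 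Hence the right-hand side grows quadratically in $T$, forcing $f(x)\to\infty$ as $T\to\infty$.

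Consequently there exists $R>0$ such that $f(x)>f(0)$ whenever $\norm{x}>R$, placing $S$ inside the closed ball of radius $R$. Combined with closedness, this yields compactness of $S$. The main subtlety is the choice $t=1/T$ in the strong convexity inequality: this particular scaling is what produces the dominant quadratic term $\tfrac{\alpha}{2}T(T-1)$, which eventually overwhelms the linear contributions $Tf(u)$ and $(T-1)f(0)$ and thereby delivers coercivity without appealing to differentiability.
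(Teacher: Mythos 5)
Your proof is correct and follows essentially the same route as the paper: closedness from continuity plus boundedness obtained by applying the strong convexity inequality along the segment from $0$ to $x$, evaluated at the point where it meets the unit sphere, and then using compactness of $S^{n-1}$ to get a uniform constant. The only (cosmetic) difference is that you rearrange the inequality to show coercivity of $f$, whereas the paper rearranges the same inequality to bound $\norm{x}$ directly for $x$ in the sublevel set.
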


\begin{proof}[Proof of \cref{thm:compa}]
    Since $f$ is continuous by \cref{thm:convexconti} and $(-\infty, f(0)]$ is closed, $f^{-1}((-\infty, f(0)])$ is also closed.
    Hence, it is sufficient to show that $f^{-1}((-\infty, f(0)])$ is bounded.
    Let $x\in f^{-1}((-\infty, f(0)])$ be an arbitrary element.
    Then, there exist some real number $\lambda\geq 0$ and some $v\in S^{n-1}$ such that $x=\lambda v$, where $S^{n-1}$ is the  $(n-1)$-dimensional unit sphere centered at the origin.
    In the case $0\leq \lambda \leq 1$, we have $\norm{x}\leq 1$.
    Now, we consider the case $\lambda>1$. Since $f$ is strongly convex, there exists $\alpha>0$ such that
    \begin{align*}
        f(t\cdot 0 + (1 - t) x) \leq t f(0) + (1 - t) f(x) - \frac{1}{2} \alpha t (1 - t) \norm{0 - x}^2
    \end{align*}
    for all $t \in [0, 1]$. 
    Since $0<\dfrac{\lambda-1}{\lambda}<1$, by substituting $t=\dfrac{\lambda-1}{\lambda}$ into the above inequality, we have
    \begin{align*}
        f(v) &\leq \frac{\lambda-1}{\lambda} f(0) + \frac{1}{\lambda} f(x) - \frac{\alpha}{2}\frac{\lambda-1}{\lambda^2} \lambda^2\norm{v}^2.
    \end{align*}
    Since $f(x)\leq f(0)$ and $\norm{v}=1$, we obtain
    \begin{align*}
        \lambda &\leq  \frac{2f(0)-2f(v)}{\alpha}+1.
    \end{align*}
    Since $f$ is continuous by \cref{thm:convexconti}, $S^{n-1}$ is compact and $\norm{x}=\lambda$, we have
    \begin{align*}
        \norm{x} &\leq  \frac{2f(0)-2c}{\alpha}+1,
    \end{align*}
    where
    $\displaystyle c=\min_{v\in S^{n-1}}f(v)$.
    Therefore, it follows that 
    \begin{align*}
        \norm{x}\leq \max\Set{1,\frac{2f(0)-2c}{\alpha}+1}
    \end{align*}
    for any $x \in f^{-1}((-\infty, f(0)])$.
    Hence, $f^{-1}((-\infty, f(0)])$ is bounded.
\end{proof}

\begin{lemma}\label{thm:minimum}
    A strongly convex function $f: \R^n \to \R$ has a unique minimizer.
\end{lemma}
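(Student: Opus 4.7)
The plan is to split the proof into existence and uniqueness, using Lemma \ref{thm:compa} for existence and the strong convexity inequality applied at $t=1/2$ for uniqueness.

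For existence, I would first observe that the origin $0 \in \R^n$ satisfies $f(0) \leq f(0)$, so the sublevel set $K := f^{-1}((-\infty, f(0)])$ is non-empty. By Lemma \ref{thm:compa}, $K$ is compact, and by Lemma \ref{thm:convexconti}, $f$ is continuous. Hence $f|_K$ attains its minimum at some point $x^* \in K$. I would then note that any $y \in \R^n \setminus K$ satisfies $f(y) > f(0) \geq f(x^*)$, while $f(x^*) \leq f(x)$ for all $x \in K$, so $x^*$ is a global minimizer of $f$ on all of $\R^n$.

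For uniqueness, suppose toward contradiction that $x^*$ and $y^*$ are two distinct minimizers of $f$, and let $\alpha > 0$ be a convexity parameter of $f$. Substituting $t = 1/2$ into the strong convexity inequality yields
\begin{align*}
f\!\left(\tfrac{1}{2}x^* + \tfrac{1}{2}y^*\right) \leq \tfrac{1}{2}f(x^*) + \tfrac{1}{2}f(y^*) - \tfrac{\alpha}{8}\norm{x^* - y^*}^2 = f(x^*) - \tfrac{\alpha}{8}\norm{x^* - y^*}^2,
\end{align*}
where the equality uses $f(x^*) = f(y^*)$. Since $x^* \neq y^*$ and $\alpha > 0$, the right-hand side is strictly less than $f(x^*)$, contradicting the minimality of $x^*$. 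Hence the minimizer is unique.

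Neither step is difficult: the compactness lemma does all the heavy lifting for existence, and the uniqueness argument is a one-line application of the definition of strong convexity. The only subtle point worth flagging is that the proof genuinely needs strong convexity rather than mere strict convexity for existence (as Remark \ref{rem:main} illustrates with $f(x)=e^x$), and Lemma \ref{thm:compa} is precisely what rules out this pathology by forcing coercivity of the sublevel sets.
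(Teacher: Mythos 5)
Your proposal is correct and follows essentially the same route as the paper's own proof: existence via compactness of the sublevel set (Lemma \ref{thm:compa}) together with continuity (Lemma \ref{thm:convexconti}), and uniqueness via the strong convexity inequality at an interior $t$ applied to two hypothetical minimizers. The only cosmetic differences are that you fix $t=1/2$ where the paper keeps $t\in(0,1)$ generic, and you spell out explicitly why the minimizer over the sublevel set is a global one, which the paper leaves implicit.
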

\begin{proof}[Proof of \cref{thm:minimum}]
    Since $f^{-1}((-\infty, f(0)])$ $(\not=\emptyset)$ is compact by \cref{thm:compa} and $f$ is continuous by \cref{thm:convexconti}, $f$ has a minimizer in $f^{-1}((-\infty, f(0)])$.

    We assume that $x\neq y$ for some minimizers $x$ and $y$. Then, since $f$ is strongly convex, there exists $\alpha > 0$ such that
    \begin{align*}
        f(t x + (1 - t) y)\leq t f(x)+ (1-t) f(y)-\frac{1}{2} \alpha t(1-t) \norm{x-y}^2 
    \end{align*}
    for some $t \in (0, 1)$.
    Since $f(x)=f(y)$ and $\norm{x-y}>0$, we have that
    \begin{align*}
        f(tx+(1-t)y)<f(x)
    \end{align*}
    for some $t\in (0, 1)$.
    This contradicts the fact that $x$ is a minimizer of $f$.
    Thus, a minimizer of $f$ must be unique.
\end{proof}

\begin{lemma}\label{thm:inequ}
    Let $f:\R^n\to \R$ be a strongly convex function with a convexity parameter $\alpha >0$ and $x_0$ be the unique minimizer of $f$.
    Then, we have
    \begin{align*}
        f(x_0) + \frac{\alpha}{2} \norm{x - x_0}^2 \leq f(x)
    \end{align*}
    for any $x \in \R^n$.
\end{lemma}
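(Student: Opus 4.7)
The plan is to apply the strong convexity inequality to the pair $(x, x_0)$ and exploit the fact that $x_0$ is a global minimizer. Concretely, I would start from the defining inequality of strong convexity, which guarantees the existence of $\alpha > 0$ such that
\begin{align*}
    f(tx + (1-t)x_0) \leq t f(x) + (1-t) f(x_0) - \frac{1}{2}\alpha t(1-t)\|x - x_0\|^2
\end{align*}
for every $t \in [0,1]$. The case $x = x_0$ is trivial, so I may assume $x \neq x_0$.

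Since $x_0$ is the minimizer of $f$ by hypothesis (and exists uniquely by \cref{thm:minimum}), the left-hand side is bounded below by $f(x_0)$. Substituting this lower bound and rearranging yields
\begin{align*}
    t f(x_0) \leq t f(x) - \frac{1}{2}\alpha t(1-t)\|x - x_0\|^2.
\end{align*}
For $t \in (0,1]$, dividing by $t$ gives
\begin{align*}
    f(x_0) \leq f(x) - \frac{\alpha}{2}(1-t)\|x - x_0\|^2.
\end{align*}
Letting $t \to 0^+$ (the right-hand side is a continuous function of $t$, being linear in $t$) produces the desired inequality $f(x_0) + \frac{\alpha}{2}\|x - x_0\|^2 \leq f(x)$.

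I do not foresee any serious obstacle: the proof is an elementary manipulation of the strong convexity inequality combined with the minimizing property of $x_0$. The only subtlety to be careful about is that no differentiability of $f$ is available, which rules out arguments based on the gradient of $f$; the passage $t \to 0^+$ is what replaces the usual gradient inequality characterization of strong convexity used in the smooth setting.
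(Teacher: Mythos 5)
Your proof is correct and rests on exactly the same ingredients as the paper's: the strong convexity inequality applied to the pair $(x,x_0)$, the bound $f(x_0)\leq f(tx+(1-t)x_0)$ from the minimizing property, division by $t\in(0,1]$, and the limit $t\to 0^+$. The only difference is that you argue directly while the paper phrases the identical computation as a proof by contradiction (assuming a violation by $\delta<0$ and deriving $-\delta\leq\frac{1}{2}\alpha t\norm{x-x_0}^2$ for all $t$), which is a purely cosmetic distinction.
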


\begin{proof}[Proof of \cref{thm:inequ}]
    We assume that there exists $x_1\in\R^n$ such that 
    \begin{align*}
        f(x_0) + \frac{\alpha}{2} \norm{x_1 - x_0}^2 > f(x_1).
    \end{align*}
    Set $\delta = f(x_1)-f(x_0) -\frac{\alpha}{2} \norm{x_1 - x_0}^2$. 
    Note that $\delta < 0$ and $x_1\neq x_0$.  
    Since $x_0$ is a minimizer of $f$ and the function $f$ is strongly convex, we have 
     \begin{align*}
        f(x_0) \leq  f(t x_1 + (1 - t) x_0)\leq f(x_0)+\delta t+\frac{1}{2} \alpha t^2 \norm{x_1 - x_0}^2
    \end{align*}
    for all $t \in (0, 1]$.
    By dividing the above inequality by $t\in (0,1]$, we obtain
    \begin{align*}
        -\delta \leq \frac{1}{2} \alpha t \norm{x_1 - x_0}^2 
    \end{align*}
    for all $t \in (0, 1]$.
    Since 
     \begin{align*}
     \lim_{t \to 0}\frac{1}{2} \alpha t \norm{x_1 - x_0}^2=0,
     \end{align*}
     this contradicts $-\delta>0$.
\end{proof}
By the same method as in the proof of \cite[Lemma~2.1.4~(p.~64)]{Nesterov2004}, we have the following.
\begin{lemma}\label{thm:weight}
    Let $f_i: \R^n \to \R$ be a strongly convex function with a convexity parameter $\alpha_i > 0$, where $i\in M$.
    Then, for any $(w_1, \dots, w_m) \in \Delta^{m - 1}$, the function $\sum_{i = 1}^m w_i f_i: \R^n \to \R$ is a strongly convex function with a convexity parameter $\sum_{i = 1}^m w_i \alpha_i$.
\end{lemma}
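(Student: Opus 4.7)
The plan is a direct verification from the defining inequality of strong convexity. Since the condition
\[
f(tx + (1-t)y) \leq t f(x) + (1-t) f(y) - \tfrac{1}{2}\alpha\, t(1-t)\norm{x-y}^2
\]
is linear in $f$ (with a quadratic remainder whose coefficient is $\alpha$), any non-negative linear combination of strongly convex functions inherits the same inequality, and the resulting convexity parameter is simply the corresponding linear combination of the individual $\alpha_i$.

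Concretely, I would fix arbitrary $x, y \in \R^n$ and $t \in [0,1]$, invoke the strong convexity inequality for each $f_i$ with parameter $\alpha_i$, multiply the $i$-th inequality by $w_i \geq 0$, and sum over $i \in M$. The left-hand sides aggregate to $\sum_{i=1}^m w_i f_i(tx + (1-t)y)$, while the right-hand sides aggregate to
\[
t \sum_{i=1}^m w_i f_i(x) + (1-t) \sum_{i=1}^m w_i f_i(y) - \tfrac{1}{2}\Bigl(\sum_{i=1}^m w_i \alpha_i\Bigr) t(1-t) \norm{x-y}^2,
\]
which is precisely the defining inequality for $g = \sum_{i=1}^m w_i f_i$ to be strongly convex with parameter $\sum_{i=1}^m w_i \alpha_i$.

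The only substantive point worth checking is that this candidate convexity parameter is strictly positive, as the definition requires $\alpha > 0$. This is immediate from $(w_1, \ldots, w_m) \in \Delta^{m-1}$: the constraint $\sum_{i=1}^m w_i = 1$ forces at least one $w_i$ to be positive, and since each $\alpha_i > 0$, we conclude $\sum_{i=1}^m w_i \alpha_i > 0$. There is no real obstacle here — the argument is the straightforward generalization from a sum of two strongly convex functions (as in the referenced Nesterov proof) to an arbitrary convex combination of $m$ of them.
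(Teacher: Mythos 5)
Your proposal is correct and follows essentially the same route as the paper: multiply each strong-convexity inequality by $w_i \geq 0$, sum over $i \in M$, and note that $\sum_{i=1}^m w_i = 1$ together with $\alpha_i > 0$ forces $\sum_{i=1}^m w_i \alpha_i > 0$. No gaps.
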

\begin{proof}[Proof of \cref{thm:weight}]
    Since $f_i$ is a strongly convex function with a convexity parameter $\alpha_i > 0$  for any $i \in M$, we have that
    \begin{align*}
        f_i(t x + (1 - t) y) \leq t f_i(x) + (1 - t) f_i(y) - \frac{1}{2} \alpha_i t (1 - t) \norm{x - y}^2
    \end{align*}
    for all $x, y \in \R^n$ and all $t \in [0, 1]$. 
    Since $w_i\geq 0$ for any $i \in M$, we have the following inequality
    \begin{align*}
        &\left(\sum_{i=1}^{m} w_if_i\right)(t x + (1 - t) y) \\
        \leq & t \left(\sum_{i=1}^{m} w_if_i\right)(x) + (1 - t) \left(\sum_{i=1}^{m} w_if_i\right)(y) - \frac{1}{2} \left(\sum_{i=1}^{m} w_i\alpha_i\right) t (1 - t) \norm{x - y}^2
    \end{align*} 
    for all $x, y \in \R^n$ and all $t \in [0, 1]$.
    Since $\sum_{i = 1}^m w_i = 1$, $\sum_{i = 1}^m w_i \alpha_i$ is positive.
    Thus, $\sum_{i = 1}^m w_i f_i: \R^n \to \R$ is a strongly convex function with a convexity parameter $\sum_{i = 1}^m w_i \alpha_i$.
\end{proof}

In order to give the last lemma (\cref{thm:ine}) in this section, which is essentially used in the proof of \cref{thm:main}, we prepare the following three lemmas (\cref{thm:weak,thm:closed,thm:compact}). 

Let $f: X \to \R^m$ be a mapping, where $X$ is a given arbitrary set.
A point $x \in X$ is called a \emph{weak Pareto solution} of $f$ if there does not exist another point $y \in X$ such that $f_i(y) < f_i(x)$ for all $i \in M$.
Then, by $X^{\mathrm w}(f)$, we denote the set consisting of all weak Pareto solutions of $f$.

\begin{lemma}[\cite{Hamada2019c}]\label{thm:weak}
    Let $f: \R^n \to \R^m$ be a strongly convex mapping.
    Then, we have $X^*(f) = X^{\mathrm w}(f)$.
\end{lemma}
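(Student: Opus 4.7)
The plan is to prove the two set inclusions separately. The inclusion $X^*(f) \subseteq X^{\mathrm w}(f)$ is immediate from the definitions: if some $y$ satisfied $f_i(y) < f_i(x)$ for all $i \in M$, then in particular $f_i(y) \leq f_i(x)$ for all $i$ with strict inequality for at least one index, so $x$ would fail to be a Pareto solution. This direction requires no convexity hypothesis.

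For the reverse inclusion $X^{\mathrm w}(f) \subseteq X^*(f)$, I would argue by contradiction. Suppose $x \in X^{\mathrm w}(f)$ but $x \notin X^*(f)$. Then there exists $y \in \R^n$ with $y \neq x$ such that $f_i(y) \leq f_i(x)$ for every $i \in M$ and $f_j(y) < f_j(x)$ for some $j \in M$. The goal is to construct a point $z$ satisfying $f_i(z) < f_i(x)$ for every $i$, contradicting $x \in X^{\mathrm w}(f)$.

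The natural candidate is $z = t x + (1 - t) y$ for a fixed $t \in (0, 1)$. For each $i$, strong convexity of $f_i$ with convexity parameter $\alpha_i > 0$ gives
\begin{align*}
    f_i(z) \leq t f_i(x) + (1 - t) f_i(y) - \tfrac{1}{2} \alpha_i t (1 - t) \norm{x - y}^2.
\end{align*}
For $i \neq j$, since $f_i(y) \leq f_i(x)$, the linear part is at most $f_i(x)$, and the penalty term $-\tfrac{1}{2}\alpha_i t(1-t)\norm{x-y}^2$ is strictly negative because $x \neq y$ and $t \in (0,1)$; hence $f_i(z) < f_i(x)$. For $i = j$, since $f_j(y) < f_j(x)$, already the linear part satisfies $t f_j(x) + (1-t) f_j(y) < f_j(x)$, and the penalty term only strengthens this to $f_j(z) < f_j(x)$. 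Thus $f_i(z) < f_i(x)$ for every $i \in M$, which is the desired contradiction.

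There is no real obstacle here; the whole argument rests on the single observation that strong convexity promotes the non-strict inequalities $f_i(y) \leq f_i(x)$ into strict inequalities at an interior convex combination, thanks to the quadratic penalty term $-\tfrac{1}{2}\alpha_i t(1-t)\norm{x-y}^2$. Ordinary convexity would not suffice for the indices $i \neq j$, which is why the result is specific to strongly convex mappings (and indeed would fail for mere convexity, as flat directions in the objectives can produce weak Pareto solutions that are not Pareto).
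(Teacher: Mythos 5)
Your proof is correct. Note that the paper itself gives no proof of this lemma---it is quoted from the reference [Hamada et al.\ 2019c]---so there is nothing in the text to compare against, but your argument is the standard one for this fact: take a dominating point $y$ and pass to an interior convex combination $z=tx+(1-t)y$, where the quadratic penalty (or, for the index $j$, already the strict inequality $f_j(y)<f_j(x)$) forces $f_i(z)<f_i(x)$ for every $i$, contradicting weak Pareto optimality. One small observation: your argument never uses the full strength of strong convexity, only strict convexity of each $f_i$ (strictness of the inequality for $x\neq y$ and $t\in(0,1)$), so the lemma holds in that slightly greater generality; your closing remark correctly identifies that plain convexity is where it breaks down.
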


\begin{lemma}[\cite{Hamada2019b}]\label{thm:closed}
    Let $f: X \to \R^m$ be a continuous mapping, where $X$ is a topological space.
    Then, $X^{\mathrm w}(f)$ is a closed set of $X$.
\end{lemma}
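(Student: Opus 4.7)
The plan is to prove the contrapositive at the level of sets: show that the complement $X\setminus X^{\mathrm w}(f)$ is open in $X$. Unpacking the definition, $x_0\notin X^{\mathrm w}(f)$ means there exists a ``witness'' $y_0\in X$ strictly dominating $x_0$ coordinatewise, namely $f_i(y_0)<f_i(x_0)$ for every $i\in M$. The key observation is that the same witness $y_0$ will continue to strictly dominate every $x$ in a small enough neighborhood of $x_0$, by continuity of each $f_i$ and finiteness of $M$.

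Concretely, given $x_0\notin X^{\mathrm w}(f)$ with witness $y_0$, define
\[
    U=\bigcap_{i=1}^{m} f_i^{-1}\bigl((f_i(y_0),\infty)\bigr).
\]
Since $f$ is continuous, each component $f_i:X\to\R$ is continuous, so each $f_i^{-1}((f_i(y_0),\infty))$ is open in $X$; the finite intersection $U$ is therefore open. By the choice of $y_0$, $x_0\in U$. Moreover, for any $x\in U$ we have $f_i(y_0)<f_i(x)$ for all $i\in M$, so $y_0$ witnesses $x\notin X^{\mathrm w}(f)$. Hence $U\subseteq X\setminus X^{\mathrm w}(f)$, which shows that an arbitrary point of the complement has an open neighborhood inside the complement, so $X^{\mathrm w}(f)$ is closed.

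There is no serious obstacle here: the argument uses only the continuity of the components $f_i$, the strictness of the inequalities in the definition of weak Pareto solution (which is what makes the ``witness'' property stable under small perturbations), and the fact that $M$ is finite so that a finite intersection of open sets remains open. No convexity or strong convexity of $f$ is needed for this particular lemma — it is purely a topological statement — which matches the generality of the hypotheses (arbitrary topological space $X$).
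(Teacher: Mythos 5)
Your argument is correct and complete: the complement of $X^{\mathrm w}(f)$ is open because any witness $y_0$ strictly dominating $x_0$ also strictly dominates every point of the open neighborhood $\bigcap_{i=1}^{m} f_i^{-1}\bigl((f_i(y_0),\infty)\bigr)$, using only continuity of the components and finiteness of $M$. The paper states this lemma with a citation to an external reference rather than proving it, and your proof is the standard one that the cited source uses, so there is nothing further to compare.
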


\begin{lemma}\label{thm:compact}
    Let $f: \R^n \to \R^m$ be a strongly convex mapping.
    Then, $X^*(f)$ is compact.
\end{lemma}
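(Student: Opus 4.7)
The plan is to show that $X^*(f)$ is both closed and bounded in $\R^n$, and hence compact by the Heine--Borel theorem.

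For closedness, I would combine the lemmas already at hand. Since each $f_i$ is strongly convex, it is continuous by \cref{thm:convexconti}, so $f$ itself is continuous. Then \cref{thm:weak} gives $X^*(f) = X^{\mathrm w}(f)$, and \cref{thm:closed} shows this set is closed in $\R^n$.

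For boundedness, which is the substantive part, I would exploit the representation of Pareto solutions as minimizers of weighted sums. For each $i\in M$, let $\alpha_i > 0$ be a convexity parameter of $f_i$ and let $x_i^*$ be the unique minimizer guaranteed by \cref{thm:minimum}. Set $\alpha = \min_{i\in M} \alpha_i > 0$, $L = \min_{i\in M} f_i(x_i^*)$, and $B = \max_{i\in M} f_i(0)$; note that these three constants are independent of any individual $x \in X^*(f)$. Given any $x \in X^*(f)$, since strong convexity implies convexity, \cref{thm:necessary} supplies some $(w_1,\dots,w_m) \in \Delta^{m-1}$ such that $x$ is a minimizer of $g_w := \sum_{i=1}^m w_i f_i$. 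By \cref{thm:weight}, $g_w$ is strongly convex with parameter $\sum_i w_i \alpha_i \geq \alpha$, and by \cref{thm:minimum} this minimizer is unique, so \cref{thm:inequ} applies with minimizer $x$ and test point $y = 0$, yielding
\begin{align*}
\frac{\alpha}{2}\|x\|^2 \leq g_w(0) - g_w(x).
\end{align*}
Since $(w_1,\dots,w_m) \in \Delta^{m-1}$, the right-hand side is controlled uniformly in $w$: indeed $g_w(0) = \sum_i w_i f_i(0) \leq B$ and $g_w(x) = \sum_i w_i f_i(x) \geq \sum_i w_i f_i(x_i^*) \geq L$, the last inequality being a convex-combination bound. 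Therefore $\|x\|^2 \leq 2(B-L)/\alpha$, a bound independent of $x \in X^*(f)$.

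The main obstacle is precisely this uniformity in the scalarization weight $w$: at first glance the strong convexity constant and the reference value of $g_w$ both depend on $w$, so one must take care to replace them by $w$-free bounds. The key observations are that $\min_i \alpha_i$ lower-bounds $\sum_i w_i \alpha_i$ and that the individual minimizers $x_i^*$ (available thanks to \cref{thm:minimum}) provide a $w$-free lower bound on $g_w(x)$, while $g_w(0)$ is automatically bounded by $\max_i f_i(0)$. Once these uniform estimates are in place, boundedness, and hence compactness, follows immediately.
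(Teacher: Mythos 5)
Your proof is correct, and the closedness part coincides with the paper's. The boundedness argument, however, takes a genuinely different route. You pass through scalarization: given $x\in X^*(f)$, you invoke \cref{thm:necessary} to obtain a weight $w$ for which $x$ is the (by \cref{thm:minimum,thm:weight}, unique) minimizer of $g_w=\sum_i w_i f_i$, apply the quadratic growth bound of \cref{thm:inequ} to $g_w$ at the test point $0$, and then make the resulting estimate uniform in $w$ via $\sum_i w_i\alpha_i\ge\min_i\alpha_i$, $g_w(0)\le\max_i f_i(0)$, and $g_w(x)\ge\min_i f_i(x_i^*)$. The paper instead avoids \cref{thm:necessary,thm:weight} entirely: it applies \cref{thm:inequ} to each individual $f_i$ to define compact sets $\Omega_i=\{x : f_i(x_i)+\frac{\alpha_i}{2}\|x-x_i\|^2\le f_i(x_1)\}$ and shows $X^*(f)\subseteq\bigcup_i\Omega_i$ by a direct domination argument --- if $x'$ lies outside every $\Omega_i$, then $f_i(x')>f_i(x_1)$ for all $i$, so the minimizer $x_1$ of $f_1$ dominates $x'$, contradicting Pareto optimality. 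Your approach buys a single explicit ball $\|x\|^2\le 2(B-L)/\alpha$ containing the Pareto set, at the cost of relying on the scalarization lemma and the uniformity bookkeeping you rightly flag as the crux; the paper's approach is more elementary in that it uses only the definition of a Pareto solution together with \cref{thm:inequ}, and would survive even without a scalarization characterization of Pareto points. Both are complete proofs.
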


\begin{proof}[Proof of \cref{thm:compact}]
    By \cref{thm:closed,thm:weak}, it follows that $X^*(f)$ is closed.
    Thus, for the proof, it is sufficient to show that $X^*(f)$ is bounded.
    Let $\alpha_i > 0$ be a convexity parameter of $f_i$, where $f = (f_1\ld f_m)$ and $i \in M$.
    By \cref{thm:minimum}, $f_i$ has a unique minimizer $x_i\in \R^n$ for any $i \in M$.
    Set
    \begin{align*}
        \Omega_i = \Set{x \in \R^n | f_i(x_i) + \frac{\alpha_i}{2} \norm{x - x_i}^2 \leq f_i(x_1)}.
    \end{align*}
    Since every $\Omega_i$ is compact, $\Omega = \bigcup_{i = 1}^m \Omega_i$ is also compact.
    Hence, in order to show that $X^*(f)$ is bounded, it is sufficient to show that $X^*(f) \subseteq \Omega$.
    Suppose that there exists an element $x' \in X^*(f)$ such that $x' \not \in \Omega$.
    Then, it follows that
    \begin{align}\label{eq:com}
        f_i(x_i) + \frac{\alpha_i}{2} \norm{x' - x_i}^2 > f_i(x_1)
    \end{align}
    for any $i \in M$.
    By \cref{thm:inequ}, we have
    \begin{align}\label{eq:com-2}
        f_i(x_i) + \frac{\alpha_i}{2} \norm{x' - x_i}^2 \leq f_i(x')
    \end{align}
      for any $i \in M$.
    From \cref{eq:com,eq:com-2}, it follows that $f_i(x') > f_i(x_1)$ for any $i \in M$.
    This contradicts $x' \in X^*(f)$.
\end{proof}
Now, we can define a mapping from $\Delta^{m - 1}$ into $X^*(f)$ for any strongly convex mapping $f:\R^m\to \R^n$.
Let $w = (w_1\ld w_m) \in \Delta^{m - 1}$.
Since $\sum_{i = 1}^m w_i f_i: \R^n \to \R$ is a strongly convex function by \cref{thm:weight}, the function $\sum_{i = 1}^m w_i f_i$ has a unique minimizer by \cref{thm:minimum}.
By \cref{thm:sufficient}, this minimizer is contained in $X^*(f)$.
Hence, we can define a mapping $x^*: \Delta^{m - 1} \to X^*(f)$ as follows:
\begin{align}\label{eq:map}
    x^*(w) = \arg \min_{x \in \R^n} \prn{\sum_{i = 1}^m w_i f_i(x)},
\end{align}
where $\arg \min_{x \in \R^n} \prn{\sum_{i = 1}^m w_i f_i(x)}$ is the unique minimizer of $\sum_{i = 1}^m w_i f_i$.
Note that in \cite{Hamada2019,Hamada2019b}, $x^*$ is defined only in the case where a given strongly convex mapping $f:\R^n\to \R^m$ is of class at least $C^1$.
On the other hand, by lemmas of this paper, it can be defined for any strongly convex mapping $f:\R^m\to \R^n$. 

We can show the following lemma by the same argument as in the proof of \cite[Lemma~12]{Hamada2019b} which works only in the case where a strongly convex mapping is of class $C^1$.
On the other hand, the following lemma works without differentiability.  
\begin{lemma}\label{thm:ine}
    Let $f = (f_1, \dots, f_m): \R^n \to \R^m$ be a strongly convex mapping.
    Let $\alpha_i > 0$ be a convexity parameter of $f_i$ and $K_i$ be the maximal value of $F_i: X^*(f) \times X^*(f) \to \R$ defined by $F_i(x, y) = \abs{f_i(x) - f_i(y)}$ for any $i \in M$.
    Then, for any $w = (w_1\ld w_m), \wt{w} = (\wt{w}_1\ld \wt{w}_m) \in \Delta^{m - 1}$, we have that
    \begin{align*}
        \norm{x^*(w) - x^*(\wt{w})} \leq \sqrt{\D \frac{K_0}{\alpha_0} \sum_{i = 1}^m \abs{w_i - \wt{w}_i}},
    \end{align*}
    where $\alpha_0 = \min \set{\alpha_1\ld \alpha_m}$ and $K_0 = \max \set{K_1\ld K_m}$.
\end{lemma}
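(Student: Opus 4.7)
The plan is to adapt the standard stability-of-minimizer argument, with \cref{thm:inequ} taking over the role that the first-order optimality condition plays in the differentiable setting. Write $x = x^*(w)$, $\wt{x} = x^*(\wt{w})$, $g_w = \sum_{i=1}^m w_i f_i$, and $g_{\wt{w}} = \sum_{i=1}^m \wt{w}_i f_i$. By \cref{thm:weight}, each of $g_w$ and $g_{\wt{w}}$ is strongly convex with convexity parameter $\sum_{i=1}^m w_i \alpha_i$ (respectively $\sum_{i=1}^m \wt{w}_i \alpha_i$), which is at least $\alpha_0$ because the weights are nonnegative and sum to $1$.

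The main step is to apply \cref{thm:inequ} twice: first to $g_w$ at its unique minimizer $x$, evaluated at $\wt{x}$, and then to $g_{\wt{w}}$ at its unique minimizer $\wt{x}$, evaluated at $x$. Adding the two resulting quadratic lower bounds and cancelling the $g_w(x) + g_{\wt{w}}(\wt{x})$ that appears on both sides leaves
\begin{align*}
    \alpha_0 \|x - \wt{x}\|^2 \leq \bigl[g_w(\wt{x}) - g_{\wt{w}}(\wt{x})\bigr] - \bigl[g_w(x) - g_{\wt{w}}(x)\bigr] = \sum_{i=1}^m (w_i - \wt{w}_i)\bigl(f_i(\wt{x}) - f_i(x)\bigr).
\end{align*}

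To finish, I would bound the right-hand side termwise. Since $x, \wt{x} \in X^*(f)$ by the definition \eqref{eq:map} together with \cref{thm:sufficient}, each $|f_i(\wt{x}) - f_i(x)|$ is at most $K_i \leq K_0$; the constants $K_i$ are well defined and finite because $X^*(f)$ is compact by \cref{thm:compact} and each $f_i$ is continuous by \cref{thm:convexconti}. This yields $\alpha_0 \|x - \wt{x}\|^2 \leq K_0 \sum_{i=1}^m |w_i - \wt{w}_i|$, and taking square roots gives the claim. I do not expect a serious obstacle: the argument is a direct translation of the one in \cite[Lemma~12]{Hamada2019b}, with the non-differentiable \cref{thm:inequ} substituted for the first-order characterization. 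The only bookkeeping point is checking that the $K_i$ are finite, which is exactly what compactness of $X^*(f)$ provides.
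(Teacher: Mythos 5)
Your proposal is correct and follows essentially the same route as the paper's proof: apply \cref{thm:inequ} at the two minimizers, add the resulting inequalities, use $\sum_{i}(w_i+\wt{w}_i)=2$ to lower-bound the strong-convexity coefficient by $\alpha_0$, and bound the right-hand side termwise by $K_0\sum_{i}\abs{w_i-\wt{w}_i}$ using compactness of $X^*(f)$. Your intermediate inequality is exactly the paper's \cref{eq:ine-5}, so there is nothing to add.
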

\begin{remark}
    In \cref{thm:ine}, the Pareto set $X^*(f)$ is compact by \cref{thm:compact}.
    For any $i \in M$, since $f_i$ is continuous by \cref{thm:convexconti}, the function $F_i$ has the maximal value $K_i$.
\end{remark}

\begin{proof}[Proof of \cref{thm:ine}]
    Let $w, \wt{w} \in \Delta^{m - 1}$ be arbitrary elements.
    By \cref{thm:weight}, the function $\sum_{i = 1}^m w_i f_i: \R^n \to \R$ (resp., $\sum_{i = 1}^m \wt{w}_i f_i: \R^n \to \R$) is a strongly convex function with a convexity parameter $\sum_{i = 1}^m w_i \alpha_i$ (resp., $\sum_{i = 1}^m \wt{w}_i \alpha_i$).
    Since $x^*(w)$ (resp., $x^*(\wt{w})$) is the minimizer of $\sum_{i = 1}^m w_i f_i$ (resp., $\sum_{i = 1}^m \wt{w}_i f_i$), by \cref{thm:inequ}, we get
    {\small
    \begin{align}\label{eq:ine-1}
        \prn{\sum_{i = 1}^m w_i f_i}(x^*(w)) + \frac{\sum_{i = 1}^m w_i \alpha_i}{2}
        \norm{x^*(\wt{w}) - x^*(w)}^2
        &\leq
        \prn{\sum_{i = 1}^m w_i f_i}(x^*(\wt{w})),
        \end{align}
        \begin{align}\label{eq:ine-2}
        \prn{\sum_{i = 1}^m \wt{w}_i f_i}(x^*(\wt{w})) + \frac{\sum_{i = 1}^m \wt{w}_i \alpha_i}{2}
        \norm{x^*(w) - x^*(\wt{w})}^2
        &\leq
        \prn{\sum_{i = 1}^m \wt{w}_i f_i}(x^*(w)).
    \end{align}
    }By \cref{eq:ine-1,eq:ine-2}, we have
    {\small
    \begin{align}\label{eq:ine-3}
        \frac{\sum_{i = 1}^m w_i \alpha_i}{2}
        \norm{x^*(\wt{w}) - x^*(w)}^2
        &\leq
        \sum_{i = 1}^m w_i \prn{f_i(x^*(\wt{w})) - f_i(x^*(w))},
    \end{align}
    \begin{align}\label{eq:ine-4}
        \frac{\sum_{i = 1}^m \wt{w}_i \alpha_i}{2}
        \norm{x^*(\wt{w}) - x^*(w)}^2
        &\leq
        \sum_{i = 1}^m \wt{w}_i \prn{f_i(x^*(w)) - f_i(x^*(\wt{w}))},
    \end{align}
    }respectively.
    By \cref{eq:ine-3,eq:ine-4}, we obtain
    {\small
    \begin{align*}
        \frac{\sum_{i = 1}^m (w_i + \wt{w}_i) \alpha_i}{2}
        \norm{x^*(\wt{w}) - x^*(w)}^2
        &\leq
        \sum_{i = 1}^m (w_i - \wt{w}_i)
        (
        f_i(x^*(\wt{w})) - f_i(x^*(w))
        ).
    \end{align*}
    }By the inequality above and $\sum_{i = 1}^m(w_i + \wt{w}_i) = 2$, it follows that 
    {\small
    \begin{align}\label{eq:ine-5}
        \alpha_0
        \norm{x^*(\wt{w}) - x^*(w)}^2
        &\leq
        \sum_{i = 1}^m (w_i - \wt{w}_i)
        (
        f_i(x^*(\wt{w})) - f_i(x^*(w))
        ).
    \end{align}
    }We also obtain
    {\small
    \begin{align*}
        \sum_{i = 1}^m \prn{w_i - \wt{w}_i}
        \prn{f_i(x^*(\wt{w})) - f_i(x^*(w))}
        &\leq
        \sum_{i = 1}^m \abs{w_i - \wt{w}_i}
        \abs{f_i(x^*(\wt{w})) - f_i(x^*(w))}
        \\
        &\leq
        \sum_{i = 1}^m \abs{w_i - \wt{w}_i} K_i.
        \\
        &\leq
        K_0 \sum_{i = 1}^m \abs{w_i - \wt{w}_i}.
    \end{align*}
    }By the inequality above and \cref{eq:ine-5}, we have
    \begin{align*}
        \alpha_0 \norm{x^*(w) - x^*(\wt{w})}^2
        \leq
        K_0 \sum_{i = 1}^m \abs{w_i - \wt{w}_i}.
    \end{align*}
    Therefore, it follows that
    \begin{align*}
        \norm{x^*(w) - x^*(\wt{w})} \leq
        \sqrt{
        \D \frac{K_0}{\alpha_0} \sum_{i = 1}^m \abs{w_i - \wt{w}_i}
        }.
    \end{align*}
\end{proof}

\section{Proof of \texorpdfstring{\cref{thm:main}}{Theorem 2}}\label{sec:mainproof}
First, we give the following essential result for the proof of \cref{thm:main} (for the definition of $x^*:\Delta^{m-1}\to X^*(f)$ in \cref{thm:contisurj}, see \cref{eq:map}).
\begin{proposition}\label{thm:contisurj}
    Let $f:\R^n \to \R^m$ be a strongly convex mapping.
    Then, the mapping $x^*: \Delta^{m - 1} \to X^*(f)$ is surjective and continuous.
\end{proposition}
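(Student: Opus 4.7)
The plan is to handle surjectivity and continuity separately, with both parts being essentially immediate consequences of the lemmas prepared in \cref{sec:pre}; the proposition is the bridge that converts those lemmas into a single continuous surjection from the parameter simplex onto the Pareto set.

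\textbf{Surjectivity.} Let $x \in X^*(f)$ be arbitrary. Since $f$ is strongly convex, each component $f_i$ is in particular convex, so $f$ is a convex mapping in the sense of the definition preceding \cref{thm:necessary}. Applying \cref{thm:necessary} produces some $w = (w_1,\ldots,w_m) \in \Delta^{m-1}$ such that $x$ is a minimizer of $\sum_{i=1}^m w_i f_i$. By \cref{thm:weight}, $\sum_{i=1}^m w_i f_i$ is itself strongly convex, so by \cref{thm:minimum} this minimizer is unique. Hence $x = \arg\min_{y \in \R^n}\sum_{i=1}^m w_i f_i(y) = x^*(w)$, and $x^*$ hits every Pareto point.

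\textbf{Continuity.} This is essentially free from \cref{thm:ine}: for any $w,\wt w \in \Delta^{m-1}$,
\[
\norm{x^*(w) - x^*(\wt w)} \leq \sqrt{\D\frac{K_0}{\alpha_0}\sum_{i=1}^m \abs{w_i - \wt w_i}},
\]
with $\alpha_0,K_0$ independent of $w,\wt w$. The right-hand side tends to $0$ as $\wt w \to w$, so $x^*$ is continuous on $\Delta^{m-1}$ (in fact uniformly $\tfrac{1}{2}$-Hölder continuous).

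\textbf{Main obstacle.} There is no real obstacle at this point because the technical work has been absorbed into the lemmas of \cref{sec:pre}; the only thing one must be careful about is invoking the correct well-definedness of $x^*$, namely that \cref{thm:weight} together with \cref{thm:minimum} guarantees the unique minimizer used in \cref{eq:map} exists for every $w \in \Delta^{m-1}$, and that \cref{thm:sufficient} places it in $X^*(f)$ so that $x^*$ indeed takes values in the claimed codomain. Once these verifications are cited, surjectivity is a one-line consequence of \cref{thm:necessary} and continuity is a one-line consequence of \cref{thm:ine}.
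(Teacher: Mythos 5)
Your proposal is correct and follows essentially the same route as the paper: surjectivity via \cref{thm:necessary} (you additionally spell out the uniqueness step through \cref{thm:weight} and \cref{thm:minimum}, which the paper leaves implicit) and continuity as a direct consequence of the H\"older-type bound in \cref{thm:ine}.
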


\cref{thm:main} follows from \cref{thm:contisurj} as follows:
Let $I=\set{i_1\ld i_k}$ $(i_1<\cdots <i_k)$ be an arbitrary non-empty subset of $M$ as in \cref{sec:main}.
Since $f_I: \R^n \to \R^k$ is a strongly convex mapping, $x^*|_{\Delta_I}: \Delta_I \to X^*(f_I)$ is surjective and continuous by \cref{thm:contisurj}.
Hence, the problem of minimizing $f$ is $C^0$ weakly simplicial.

By the argument above, in order to complete the proof of \cref{thm:main}, it is sufficient to show \cref{thm:contisurj}.
\begin{proof}[Proof of \cref{thm:contisurj}]
    By \cref{thm:necessary}, the mapping $x^*$ is surjective.
    Now, we prove that $x^*$ is continuous.
    Let $\wt{w} = (\wt{w}_1\ld \wt{w}_m) \in \Delta^{m - 1}$ be any element.
    For the proof, it is sufficient to show that $x^*$ is continuous at $\wt{w}$.
    Let $\ep$ be an arbitrary positive real number.
    Then, there exists an open neighborhood $V$ of $\wt{w}$ in $\Delta^{m - 1}$ satisfying
    \begin{align*}
        \sqrt{\D \frac{K_0}{\alpha_0} \sum_{i = 1}^m \abs{w_i - \wt{w}_i}} < \ep
    \end{align*}
    for any $w \in V$, where $K_0$ and $\alpha_0$ are defined in \cref{thm:ine}.
    From \cref{thm:ine}, it follows that
    \begin{align*}
        \norm{x^*(w) - x^*(\wt{w})} < \ep
    \end{align*}
    for any $w \in V$.
\end{proof}

\begin{remark}
    The mapping $x^*$ in \cref{thm:contisurj} is not necessarily injective even in the case where the minimizers of $f_1\ld f_m$ are different as follows.
    Let $f = (f_1, f_2): \R \to \R^2$ be the mapping defined by
    \begin{align*}
        f_1(x) & = x^2+\abs{x}, \\
        f_2(x) & = (x-1)^2+\abs{x-1}.
    \end{align*}
    Then, $f$ is strongly convex\footnote{
    It follows from the fact that a function $f: \R^n \to \R$ is strongly convex with a convexity parameter $\alpha > 0$ if and only if the function $g: \R^n \to \R$ defined by $g(x) = f(x) - \frac{\alpha}{2} \norm{x}^2$ is convex (for the proof of the fact, see for example \cite{Hamada2019b}).}
    and non-differentiable.
    Since the minimizer of $f_1$ is $x=0$ and that of $f_2$ is $x=1$, these minimizers are different. 
    Let $\varphi: [0, 1] \to \Delta^1$ be the diffeomorphism defined by $\varphi(w_1) = (w_1, 1 - w_1)$.
    We can easily obtain the following:
    \begin{align*}
        x^* \circ \varphi(w_1) & =
        \begin{cases}
            \D 1 & \text{if $0 \leq w_1 < \frac{1}{4}$},\\
            \D \frac{3-4w_1}{2} & \text{if $\frac{1}{4} \leq w_1 \leq \ \frac{3}{4}$},\\
            \D 0 & \text{if $\frac{3}{4} < w_1 \leq 1$}.\\
        \end{cases}
    \end{align*}
    Thus, $x^*$ is not injective. 
\end{remark}

\section{Application to elastic net}\label{sec:application}
In this section, we demonstrate an application of \cref{thm:main} to a statistical modeling problem.
First, we introduce the elastic net and reformulate it to a multi-objective optimization problem in \cref{sec:elastic-net}.
Second, we introduce the B\'ezier simplex and its fitting algorithm in \cref{sec:bezier-simplex}.
Through experiments, we show the practicality of this method in \cref{sec:experiments}.
Results and discussion are presented in \cref{sec:results,sec:discussion}.

\subsection{Elastic net and its multi-objective reformulation}\label{sec:elastic-net}
The elastic net~\cite{Zou2005} is a sparse modeling method that is originally a single-objective problem but can be reformulated as a multi-objective one.
Let us consider a linear regression model:
\begin{align*}
    y = \theta_1 x_1 + \theta_2 x_2 + \dots + \theta_n x_n + \zeta,
\end{align*}
where $x_i$ and $\theta_i$ $(i=1, \dots, n)$ are a predictor and its coefficient, $y$ is a response to be predicted, and $\zeta$ is a Gaussian noise.
Given a matrix $X$ with $m$ rows of observations and $n$ columns of predictors, a row vector $y$ of $m$ responses, the (original) elastic net regressor is the solution to the following problem:
\begin{equation}\label{eqn:elastic-net}
    \minimize_{\theta \in \R^n} g_{\mu,\lambda}(\theta) = \frac{1}{2m} \norm{X \theta - y}^2 + \mu \abs{\theta} + \frac{\lambda}{2} \norm{\theta}^2,
\end{equation}
where $\norm{\cdot}$ is the $\ell_2$-norm, $\abs{\cdot}$ is the $\ell_1$-norm, and $\mu$, $\lambda$ are fixed non-negative numbers for regularization.
Note that with $\mu = \lambda = 0$, the problem \cref{eqn:elastic-net} reduces to the ordinary least squares (OLS) regression; with $\mu > 0$ and $\lambda = 0$, it turns into the lasso regression \cite{Tibshirani1996}, which find a sparse solution that suppresses ineffective predictors; with $\mu = 0$ and $\lambda > 0$, it becomes the ridge regression \cite{Hoerl1970}, which finds a stable solution against multicollinear predictors.
Thus the elastic net regression, with $\mu > 0$ and $\lambda > 0$, inherits both of the lasso and ridge properties.
Choosing appropriate values for $\mu$ and $\lambda$ involves a 2-D black-box search on an unbounded domain, which often requires a great deal of computational effort.

In order to avoid such an expensive hyper-parameter search, we reformulate the problem into a multi-objective strongly convex one: first, consider its weighting problem; next, make an approximation of the solution mapping (which requires fewer models to train than the original hyper-parameter search does); then, compare possible models on the approximation and find the best weight (which is computationally cheap and does not require additional training); and finally, send the best weight back to a hyper-parameter in the original problem.

For this purpose, we separate the OLS term and the regularization terms into individual objective functions:
\[
    f_1(\theta) = \frac{1}{2m} \norm{X \theta - y}^2,\quad
    f_2(\theta) = \abs{\theta},\quad
    f_3(\theta) = \frac{1}{2}\norm{\theta}^2.
\]
The functions $f_1$ and $f_2$ are convex but may not be strongly convex.
We add a small amount of $f_3$ values to each function, making them strongly convex:
\begin{equation}\label{eqn:elastic-net-mop}
    \begin{split}
        \minimize_{\theta \in \R^n}\ & \tilde f(\theta) = (\tilde f_1(\theta), \tilde f_2(\theta), \tilde f_3(\theta))\\
        \text{where }               & \tilde f_i(\theta) = f_i(\theta) + \varepsilon f_3(\theta) \quad (i = 1, 2, 3).
    \end{split}
\end{equation}
In \cref{eqn:elastic-net-mop}, we assume that $\varepsilon$ is a positive real number.
Hence, the mapping $\tilde f$ in \cref{eqn:elastic-net-mop} is strongly convex but non-differentiable.

Now let us consider how to obtain the whole Pareto set and Pareto front of the problem \cref{eqn:elastic-net-mop}.
By \cref{thm:minimum,thm:weight}, the weighting problem
\begin{equation*}\label{eqn:elastic-net-sop}
    \minimize_{\theta \in \R^n} h_w(\theta) = w_1 \tilde f_1(\theta) + w_2 \tilde f_2(\theta) + w_3 \tilde f_3(\theta)
\end{equation*}
has a unique solution for every weight $w = (w_1, w_2, w_3) \in \Delta^2$.
We denote this solution by $\arg\min_{\theta \in \R^n} h_w(\theta)$.
By \cref{thm:contisurj}, one can define a continuous surjection $\theta^*: \Delta^2 \to X^*(\tilde f)$ by
\begin{align}
    \theta^*(w) = \arg\min_{\theta \in \R^n} h_w(\theta).\label{eqn:elastic-net-solution}
\end{align}
Since $\tilde f: \R^n\to\R^3$ is continuous by \cref{thm:convexconti}, the mapping
\begin{align*}
    \tilde f \circ \theta^*: \Delta^2 \to \tilde f(X^*(\tilde f))
\end{align*}
is also surjective and continuous, where the topology of $\tilde f(X^*(\tilde f))$ is induced from $\R^3$.
We define the \emph{Pareto graph} of $\tilde f$ by $G^*(\tilde f)= \set{(\theta,\tilde f(\theta))\in\R^{n+3} | \theta\in X^*(\tilde f)}$ and the \emph{solution mapping} by
\begin{align}
    (\theta^*, \tilde f \circ \theta^*): \Delta^2 \to G^*(\tilde f),\label{eq:solution-mapping}
\end{align}
which is again surjective and continuous, where the topology of $G^*(\tilde f)$ is induced from $\R^{n+3}$.
Since $\theta^*(\Delta^2_I) = X^*(\tilde f_I)$ for all $I$ satisfying $\emptyset \neq I \subseteq \set{1, 2, 3}$, the solution mapping contains the information of the Pareto set and the Pareto front of every subproblem, i.e., the mappings
\begin{align*}
                     \theta^*|_{\Delta^2_I}&: \Delta^2_I \to X^*(\tilde f_I),\\
    \tilde f_I \circ \theta^*|_{\Delta^2_I}&: \Delta^2_I \to \tilde f_I(X^*(\tilde f_I))
\end{align*}
for all $I$ such that $\emptyset \neq I \subseteq \set{1, 2, 3}$ are surjective and continuous, where the topology of $\Delta^2_I$,  $X^*(\tilde f_I)$, and $\tilde f_I(X^*(\tilde f_I))$ are induced from $\Delta^2$, $\R^n$, and $\R^{|I|}$, respectively.

Note that for any $w = (w_1, w_2, w_3) \in \Delta^2 \setminus \Delta^2_{\set{2, 3}}$, the point $\theta^*(w)$ is the minimizer of the function $g_{\mu(w), \lambda(w)}$ in \cref{eqn:elastic-net}, where
\begin{equation}\label{eq:elastic-net-mop2sop}
    \begin{split}
        \mu(w)     &= \frac{w_2}{w_1},\\
        \lambda(w) &= \frac{w_3 + \varepsilon}{w_1},
    \end{split}
\end{equation}
and $\varepsilon$ is given in \cref{eqn:elastic-net-mop}.
In particular, the image of the mapping $(\theta^*, \tilde f_I \circ \theta^*)|_{\Delta^2_I}: \Delta^2_I \to G^*(\tilde f_I)$ for $I=\set{1}, \set{1,2}, \set{1,3}$ approximates\footnote{This image does not exactly coincide with the original solutions due to $\varepsilon$ introduced in \cref{eqn:elastic-net-mop}.} to an OLS solution, the lasso solution path, and the ridge solution path, respectively.
The equations in \cref{eq:elastic-net-mop2sop} are easily obtained by comparing \cref{eqn:elastic-net,eqn:elastic-net-mop}.

\begin{example}\label{thm:example}
    \begin{figure}[tpb]
        \begin{overpic}[width=.45\textwidth]{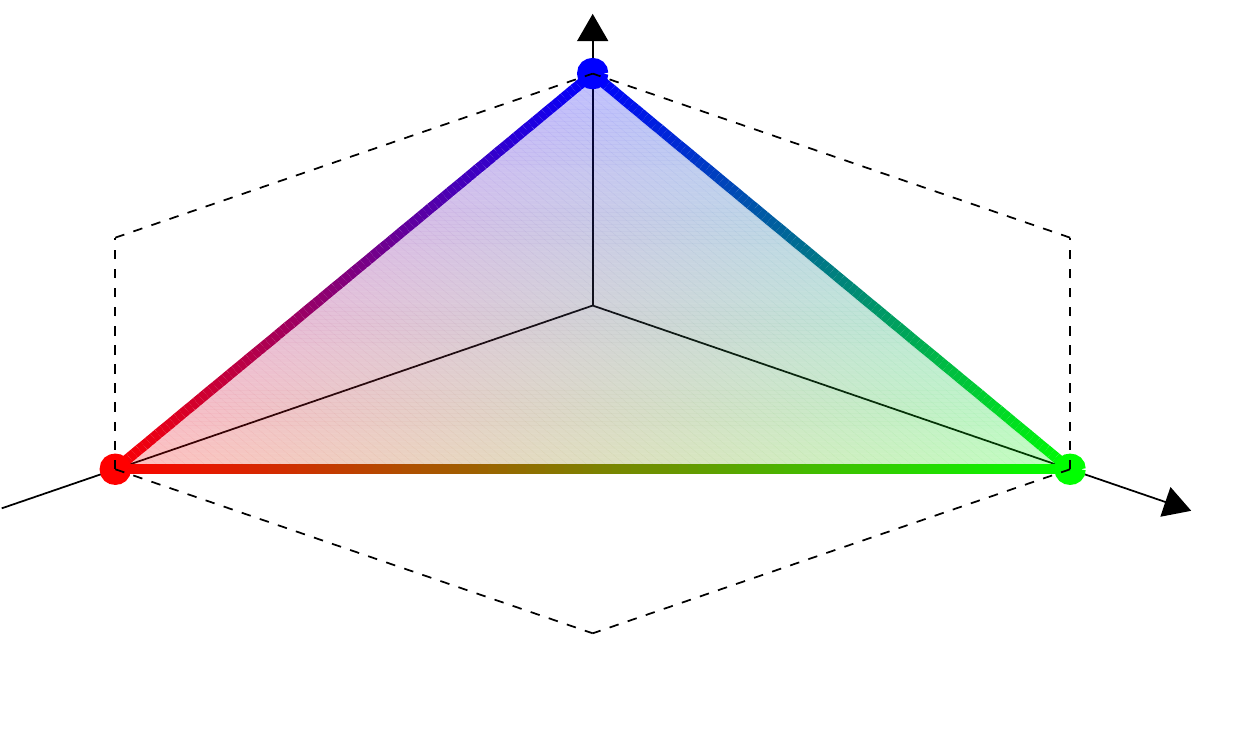}
            \put( 44, 37){\tiny $0$}
            \put( -4, 18){\tiny $w_1$}
            \put( 96, 18){\tiny $w_2$}
            \put( 46, 60){\tiny $w_3$}
            \put(  5,  1){\tiny $\Delta^2_{\{1\}}$ (OLS)}
            \put(  9,  5){\vector(0, 1){16}}
            \put(103, 42){\tiny $\Delta^2_{\{2\}}$ ($L_1$-reg.)}
            \put(103, 40){\vector(-1,-1){16}}
            \put( 65, 53){\tiny $\Delta^2_{\{3\}}$ ($L_2$-reg.)}
            \put( 65, 54){\vector(-1, 0){16}}
            \put( 34,  1){\tiny $\Delta^2_{\{1,2\}}$ (Lasso)}
            \put( 39,  6){\vector(0, 1){16}}
            \put(  6, 58){\tiny $\Delta^2_{\{1,3\}}$ (Ridge)}
            \put( 14, 56){\vector(1,-1){16}}
        \end{overpic}
    \[\xymatrix{
             & \Delta^2 \ar[ldd]_{\theta^*} \ar[d]^{(\theta^*, \tilde f \circ \theta^*)} \ar[rdd]^{\tilde f \circ \theta^*} & \\
             & G^*(f) \ar[ld]^{\pi_1} \ar[rd]_{\pi_2}& \\
              X^*(\tilde f) \ar[rr]^{\tilde f} & & \tilde f(X^*(\tilde f))
        }\]
        \begin{overpic}[width=.45\textwidth]{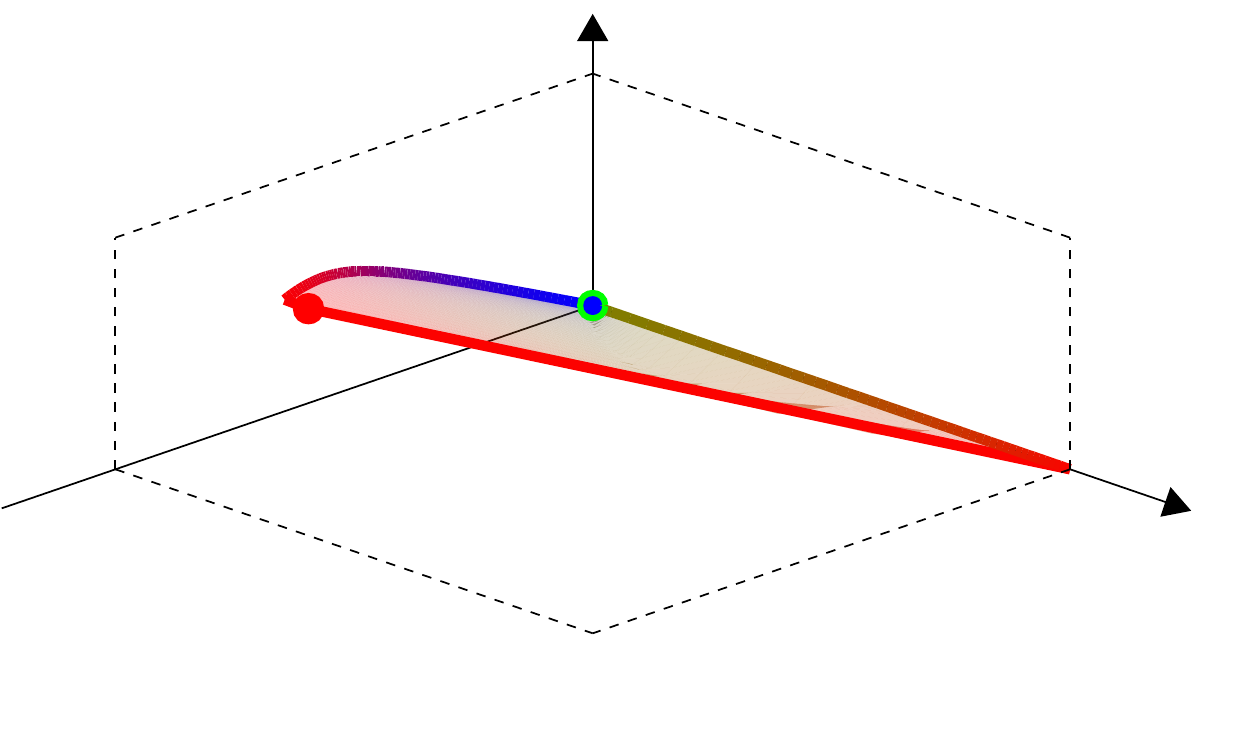}
            \put( 44, 37){\tiny $0$}
            \put( -4, 18){\tiny $\theta_1$}
            \put( 96, 18){\tiny $\theta_2$}
            \put( 46, 60){\tiny $\theta_3$}
            \put( 10, 14){\tiny $X^*(\tilde f_{\{1\}})$ (OLS)}
            \put( 24, 18){\vector(0, 1){16}}
            \put( 58, 54){\tiny $X^*(\tilde f_{\{2\}})=X^*(\tilde f_{\{3\}})$ ($L_1$\&$L_2$-reg.)}
            \put( 65, 53){\vector(-1,-1){16}}
            \put( 53,  9){\tiny $X^*(\tilde f_{\{1,2\}})$ (Lasso)}
            \put( 60, 11){\vector(0, 1){16}}
            \put(  4, 57){\tiny $X^*(\tilde f_{\{1,3\}})$ (Ridge)}
            \put( 16, 55){\vector(1,-1){16}}
        \end{overpic}\hspace{12mm}%
        \begin{overpic}[width=.45\textwidth]{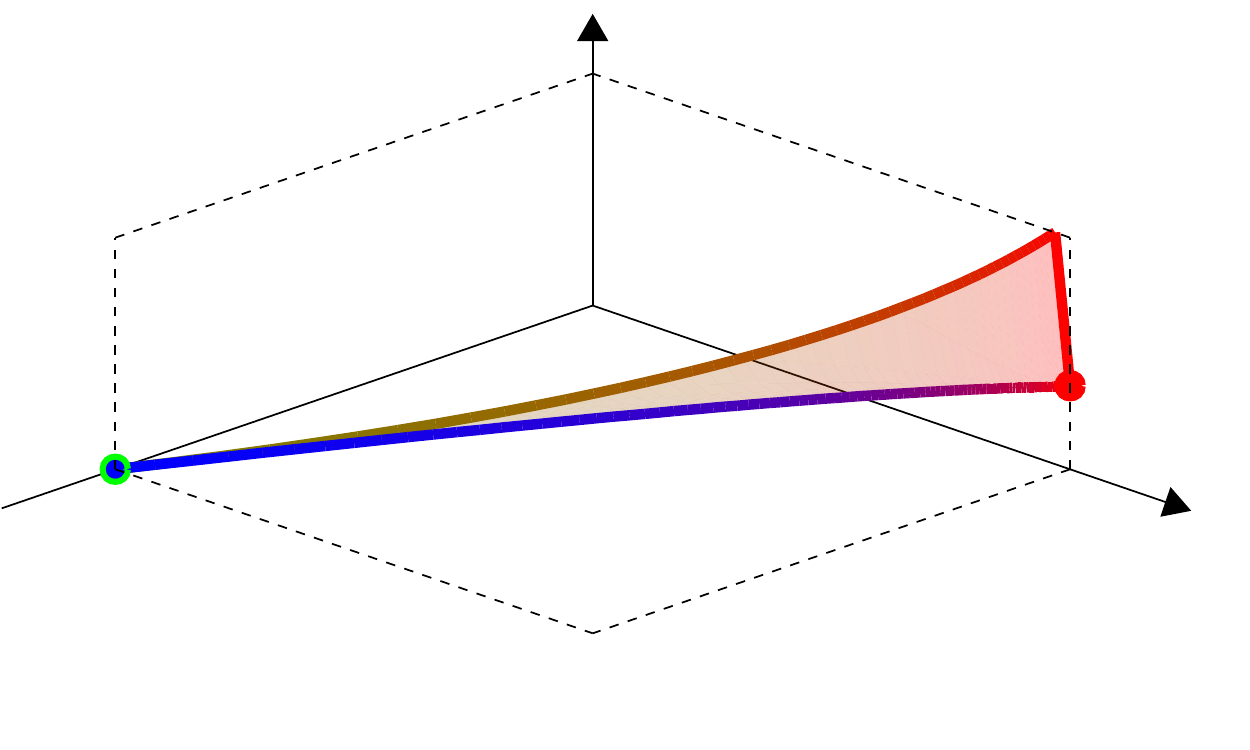}
            \put( 44, 37){\tiny $0$}
            \put( -4, 18){\tiny $\tilde f_1$}
            \put( 96, 18){\tiny $\tilde f_2$}
            \put( 46, 60){\tiny $\tilde f_3$}
            \put( 65,  9){\tiny $\tilde f(X^*(\tilde f_{\{1\}}))$ (OLS)}
            \put( 69, 12){\vector(1, 1){16}}
            \put(-28, 42){\tiny $\tilde f(X^*(\tilde f_{\{2\}}))=\tilde f(X^*(\tilde f_{\{3\}}))$ ($L_1$\&$L_2$-reg.)}
            \put( -8, 40){\vector(1,-1){16}}
            \put( 61, 51){\tiny $\tilde f(X^*(\tilde f_{\{1,2\}}))$ (Lasso)}
            \put( 67, 50){\vector(0,-1){16}}
            \put( 18,  5){\tiny $\tilde f(X^*(\tilde f_{\{1,3\}}))$ (Ridge)}
            \put( 39,  9){\vector(0, 1){16}}
        \end{overpic}\\
        \caption{Solution mapping of the elastic net in \cref{thm:example}.}\label{fig:solution-mapping}
    \end{figure}
    \Cref{fig:solution-mapping} shows the solution mapping of the elastic net for the following data:
    \[
        X=\begin{pmatrix}
             1 &  2 &  3\\
             6 &  5 &  4\\
             7 &  8 &  9\\
            12 & 11 & 10
        \end{pmatrix},\quad
        y=\begin{pmatrix}
            1\\
            2\\
            3\\
            4
        \end{pmatrix}.
    \]
    By $w=(1,0,0)$, an OLS solution (the red point in the figure) is obtained.
    With $w=(t,1-t,0)$ for $0<t<1$, the lasso solution path (the red-green curve in the figure) is obtained.
    With $w=(t,0,1-t)$ for $0<t<1$, the ridge solution path (the red-blue curve in the figure) is obtained.
\end{example}

\subsection{B\'ezier simplex fitting}\label{sec:bezier-simplex}
Let $\N$ be the set of nonnegative integers and
\[
    \N^m_d =\Set{(i_1,\dots,i_m)\in\N^m | \sum_{k=1}^m i_k = d}.
\]
An $(m-1)$-dimensional B\'ezier simplex of degree $d$ in $\R^n$ is a mapping $b:\Delta^{m-1}\to\R^n$ specified by control points $p_i \in \R^n$ $(i\in\N^m_d)$:
\[
    b(w) = \sum_{i\in\mathbb N^m_d} \binom{d}{i} w^i p_i,
\]
where $\binom{d}{i}=d!/(i_1! i_2! \cdots i_m!)$ and $w^i = w_1^{i_1} w_2^{i_2} \cdots w_m^{i_m}$.

It is known that any continuous mapping from a simplex to a Euclidean space can be approximated by a B\'ezier simplex:
\begin{theorem}[{\cite[Theorem~1]{Kobayashi2019}}]\label{thm:approx}
    Let $\phi: \Delta^{m-1}\to\R^n$ be a continuous mapping.
    There exists an infinite sequence of B\'ezier simplices $b^i:\Delta^{m-1}\to\R^n$ such that
    \[
        \lim_{i\to\infty}\sup_{w\in\Delta^{m-1}}\norm{\phi(w)-b^i(w)} = 0.
    \]
\end{theorem}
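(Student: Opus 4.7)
The plan is to use the multivariate Bernstein construction, the natural generalization of Bernstein's proof of the classical Weierstrass approximation theorem. For each degree $d \in \N$, I would define $b^d: \Delta^{m-1} \to \R^n$ to be the B\'ezier simplex of degree $d$ whose control points are the values of $\phi$ on the regular lattice inside the simplex: $p_i = \phi(i/d)$ for each multi-index $i = (i_1,\dots,i_m) \in \N^m_d$, where $i/d := (i_1/d,\dots,i_m/d) \in \Delta^{m-1}$. Thus $b^d(w) = \sum_{i \in \N^m_d} \binom{d}{i} w^i \phi(i/d)$, and the task reduces to showing $\sup_{w \in \Delta^{m-1}} \norm{b^d(w) - \phi(w)} \to 0$ as $d \to \infty$.

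The first algebraic ingredient is the partition-of-unity property of the Bernstein basis: by the multinomial theorem, $\sum_{i \in \N^m_d} \binom{d}{i} w^i = (w_1 + \dots + w_m)^d = 1$ for every $w \in \Delta^{m-1}$. This lets me rewrite the error as $b^d(w) - \phi(w) = \sum_i \binom{d}{i} w^i \bigl(\phi(i/d) - \phi(w)\bigr)$. Since $\Delta^{m-1}$ is compact, $\phi$ is uniformly continuous and bounded. Fixing $\varepsilon > 0$, I would choose $\delta > 0$ so that $\norm{w - w'} < \delta$ implies $\norm{\phi(w) - \phi(w')} < \varepsilon/2$, and then split the sum into \emph{near} indices with $\norm{i/d - w} < \delta$ and \emph{far} indices with $\norm{i/d - w} \geq \delta$. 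The contribution of the near part is at most $\varepsilon/2$ in norm, thanks to the partition of unity.

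For the far part, the second-moment identity for the multinomial distribution gives $\sum_{i \in \N^m_d} \binom{d}{i} w^i (i_k/d - w_k)^2 = w_k(1-w_k)/d$ for each coordinate $k$. Summing over $k$ yields $\sum_i \binom{d}{i} w^i \norm{i/d - w}^2 = (1 - \sum_k w_k^2)/d \leq 1/d$. A Chebyshev-type argument then bounds $\sum_{\norm{i/d - w} \geq \delta} \binom{d}{i} w^i \leq 1/(d \delta^2)$, uniformly in $w$. Combined with the uniform bound $C := 2 \sup_{\Delta^{m-1}} \norm{\phi}$, the far-part contribution is at most $C/(d \delta^2)$, which can be made less than $\varepsilon/2$ by choosing $d$ sufficiently large. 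Therefore $\sup_{w \in \Delta^{m-1}} \norm{b^d(w) - \phi(w)} < \varepsilon$ for all sufficiently large $d$, completing the proof.

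The hardest part will be the multivariate moment bookkeeping: one must compute the variance of the multinomial distribution componentwise and then assemble these into a single Chebyshev-type bound that is uniform over $w \in \Delta^{m-1}$. Once this moment estimate is in hand, the rest of the argument is the standard Bernstein two-part splitting. An alternative would be a Stone--Weierstrass approach, since B\'ezier simplices of all degrees span the polynomials on $\Delta^{m-1}$ and polynomials are dense in $C(\Delta^{m-1})$, but the direct Bernstein construction is more explicit and gives quantitative convergence rates as a bonus.
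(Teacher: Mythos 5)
Your proposal is a correct and complete proof, but note that the paper itself does not prove this statement: it is imported verbatim from the cited reference (Kobayashi et al., Theorem~1), so there is no in-paper argument to compare against. Your route is the classical one, namely the multivariate Bernstein operator on the simplex: take $p_i=\phi(i/d)$ so that $b^d(w)=\sum_{i\in\N^m_d}\binom{d}{i}w^i\phi(i/d)$, use the multinomial partition of unity to write the error as a convex combination of the increments $\phi(i/d)-\phi(w)$, and split into near and far indices via uniform continuity plus a Chebyshev bound. All the quantitative ingredients you state check out: the marginal of the multinomial distribution gives $\sum_i\binom{d}{i}w^i(i_k/d-w_k)^2=w_k(1-w_k)/d$, summing over $k$ gives $(1-\sum_k w_k^2)/d\le 1/d$ uniformly in $w$, and hence the far-index mass is at most $1/(d\delta^2)$, so the far part contributes at most $2\sup_{\Delta^{m-1}}\norm{\phi}/(d\delta^2)$. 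Since $\phi$ is vector-valued you are implicitly using the triangle inequality $\norm{\sum_i c_i v_i}\le\sum_i c_i\norm{v_i}$ for the nonnegative weights $c_i=\binom{d}{i}w^i$, which is fine and avoids any componentwise bookkeeping. Compared with the Stone--Weierstrass alternative you mention (polynomials are dense, and every polynomial of degree $d$ on $\Delta^{m-1}$ lies in the span of the degree-$d$ Bernstein basis, hence is a B\'ezier simplex), your direct construction has the advantage of producing explicit control points and an $O(1/(d\delta^2))$ rate, at the cost of the moment computation; both are legitimate, and either one establishes the theorem as stated.
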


Tanaka et al.~\cite{Tanaka2020} proposed an algorithm for adjusting control points so that a B\'ezier simplex of fixed degree fits a sample of the graph of a continuous mapping $\phi: \Delta^{m-1}\to\R^n$.
Based on the above theorem and algorithm, one can build a B\'ezier simplex approximation of the solution mapping $(\theta^*, \tilde f\circ\theta^*): \Delta^2\to\R^{n+3}$ in \cref{eq:solution-mapping} by using a sample of it, where the input is a hyper-parameter vector $w$, and the output is a concatenation of a model parameter vector $\theta^*(w)$ and a loss vector $\tilde f(\theta^*(w))$.

\subsection{Experiments}\label{sec:experiments}
We empirically confirmed the aforementioned theory that the solution mapping of the multi-objective elastic net can be approximated by a B\'ezier simplex.
We used eight datasets shown in \cref{tab:datasets}, all of which are adopted from UCI Machine Learning Repository \cite{Dua2019}.
To train the elastic net for each dataset, the attributes were split into predictors and responses according to the description on the dataset's web page.
If the dataset contains two responses, the elastic net prepares two sets of all predictors, each of which predicts each response, i.e., the elastic net has 206 predictors for Residential Building and 770 predictors for Slice Localization.
The predictors and responses were normalized as the range of each attribute becomes the unit interval.

For each dataset, a sample of the solution mapping was created as follows.
We generated 5151 hyper-parameters as grid points on $\Delta^2$:
\[
    w = \frac{1}{100}(n_1, n_2, n_3) \text{ such that } n_1, n_2, n_3 \in \set{0, 1, \dots, 100},\ n_1 + n_2 + n_3 = 100.
\]
For each point $w$, we computed the value of $\theta^*(w)$ and $\tilde f \circ \theta^*(w)$.
To do so, the weight $w = (w_1, w_2, w_3)$ was converted to the regularization coefficient $(\mu, \lambda)$ according to \cref{eq:elastic-net-mop2sop}, where the magnitude of perturbation $\varepsilon$ was set to 1E-16.
Then, the original elastic net problem \cref{eqn:elastic-net} was solved by the coordinate descent method.

\begin{center}
\begin{threeparttable}[t]
    \caption{Datasets}
    \label{tab:datasets}
    \begin{tabular}{lrrr}
    \toprule
                                                     & \multicolumn{2}{c}{Attributes}\\
                                 \cmidrule(lr){2-3}
    Dataset                                          & Predictors & Responses & Instances \\
    \midrule
    Blog Feedback\tnote{a}~\cite{Buza2014}           &   280 & 1 & 60,021 \\
    Fertility\tnote{b}~\cite{Gil2012}                &     9 & 1 &    100 \\
    Forest Fires\tnote{c}~\cite{Cortez2007}          &    12 & 1 &    517 \\
    QSAR Fish Toxicity\tnote{d}~\cite{Cassotti2015}  &     6 & 1 &    908 \\
    Residential Building\tnote{e}~\cite{Rafiei2016}  &   103 & 2 &    372 \\
    Slice Localization\tnote{f}~\cite{Graf2011}      &   385 & 2 & 53,500 \\
    Wine\tnote{g}~\cite{Aeberhard1992}               &    11 & 1 &    178 \\
    Yacht Hydrodynamics\tnote{h}~\cite{Ortigosa2007} &     6 & 1 &    308 \\
    \bottomrule
    \end{tabular}
    {\tiny
    \begin{tablenotes}
        \item[a]{\url{https://archive.ics.uci.edu/ml/datasets/BlogFeedback}}
        \item[b]{\url{https://archive.ics.uci.edu/ml/datasets/Fertility}}
        \item[c]{\url{https://archive.ics.uci.edu/ml/datasets/Forest+Fires}}
        \item[d]{\url{https://archive.ics.uci.edu/ml/datasets/QSAR+fish+toxicity}}
        \item[e]{\url{https://archive.ics.uci.edu/ml/datasets/Residential+Building+Data+Set}}
        \item[f]{\url{https://archive.ics.uci.edu/ml/datasets/Relative+location+of+CT+slices+on+axial+axis}}
        \item[g]{\url{https://archive.ics.uci.edu/ml/datasets/wine}}
        \item[h]{\url{https://archive.ics.uci.edu/ml/datasets/Yacht+Hydrodynamics}}
    \end{tablenotes}
    }
\end{threeparttable}
\end{center}
We trained B\'ezier simplices by the all-at-once method \cite{Tanaka2020} using the grid points $w$ as inputs and the elastic net results $(\theta^*(w), \tilde f \circ \theta^*(w))$ as outputs.
For each setting below, 10 trials were run with different random seeds:
\begin{description}
    \item[Train-test split ratio] $\text{Train}:\text{Test} = 51:5100$, $257:4894$, $515:4636$, $2575:2576$, $5100:51$.
    \item[Degree of B\'ezier simplex] $d = 1, 2, \dots, 14, 15, 20, 25, 30$.
\end{description}

We carried out experiments on the ITO supercomputer\footnote{\url{https://www.cc.kyushu-u.ac.jp/scp/eng/system/ITO/01_intro.html}}.
The elastic net and the B\'ezier simplex fitting were implemented by the Python package \texttt{scikit-learn 0.24.2} and \texttt{pytorch-bsf 0.0.1}, respectively.

\subsection{Results}\label{sec:results}
\begin{figure}[tpb]
    \subfloat[$\text{Train}:\text{Test} = 51:5100$\label{fig:qsar-001}]{
        \includegraphics[width=.4\textwidth]{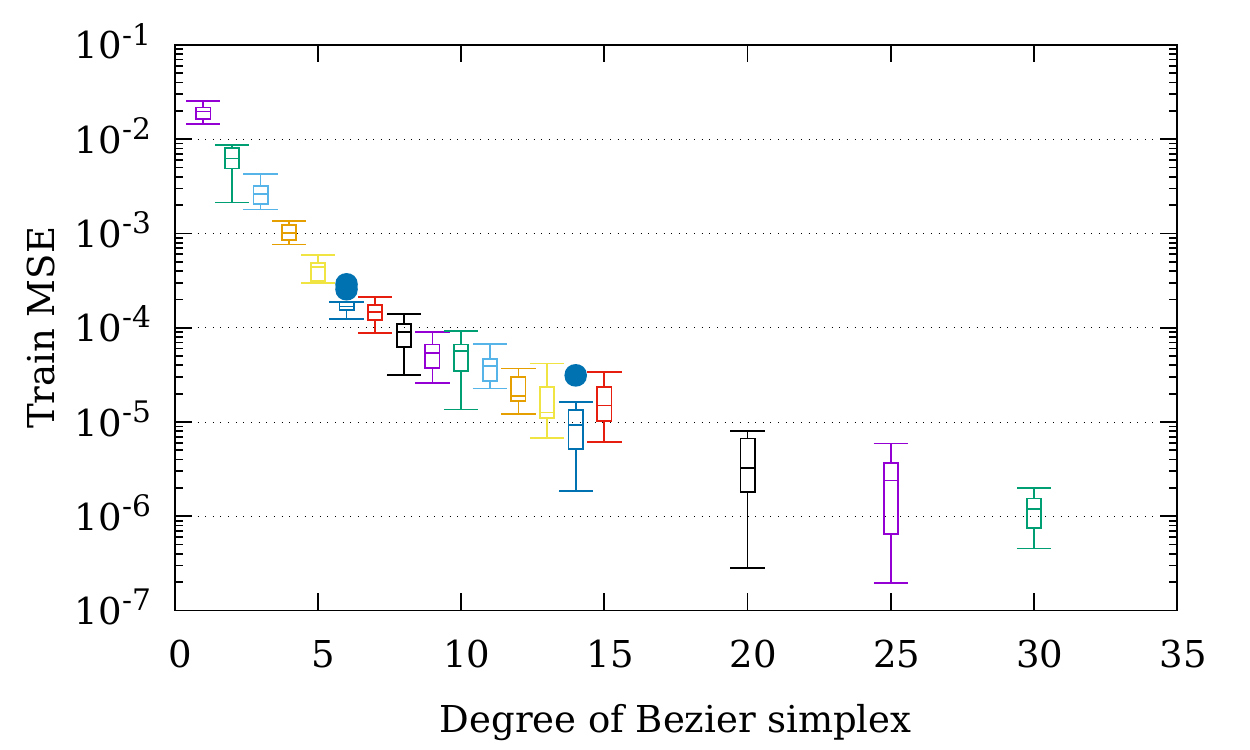}
        \includegraphics[width=.4\textwidth]{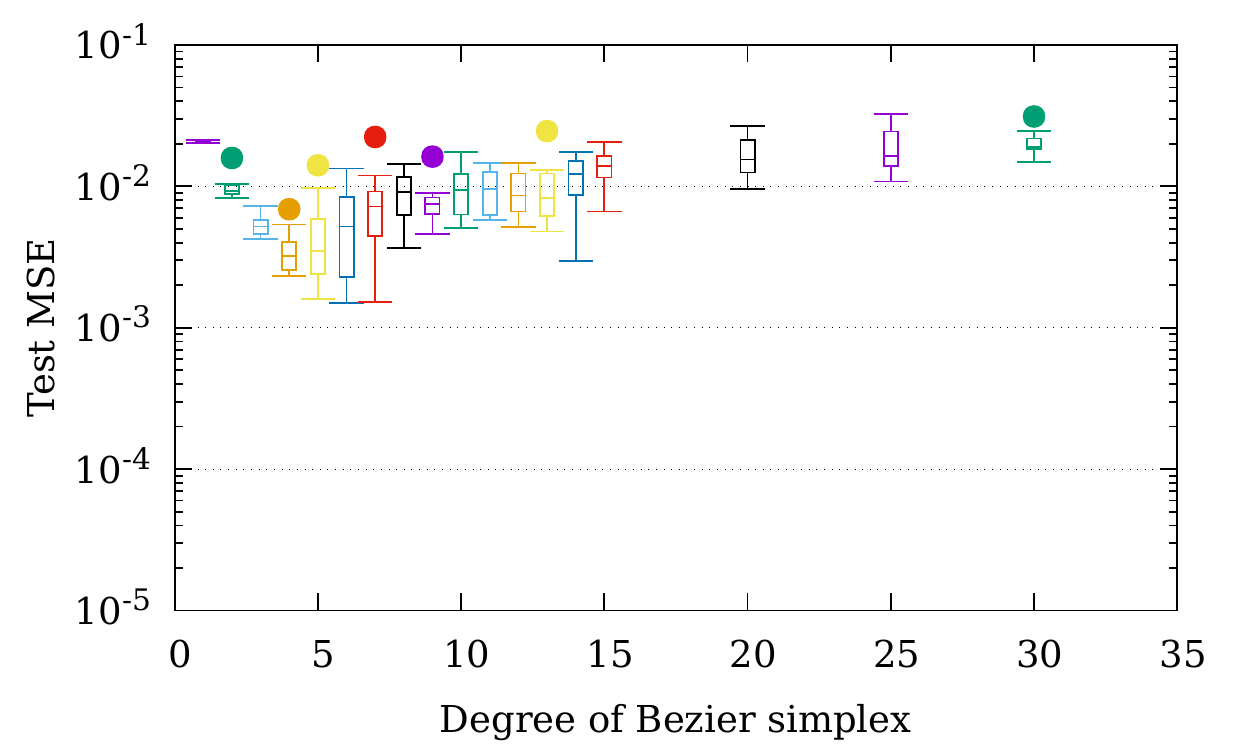}
    }\\
    \subfloat[$\text{Train}:\text{Test} = 257:4894$\label{fig:qsar-005}]{
        \includegraphics[width=.4\textwidth]{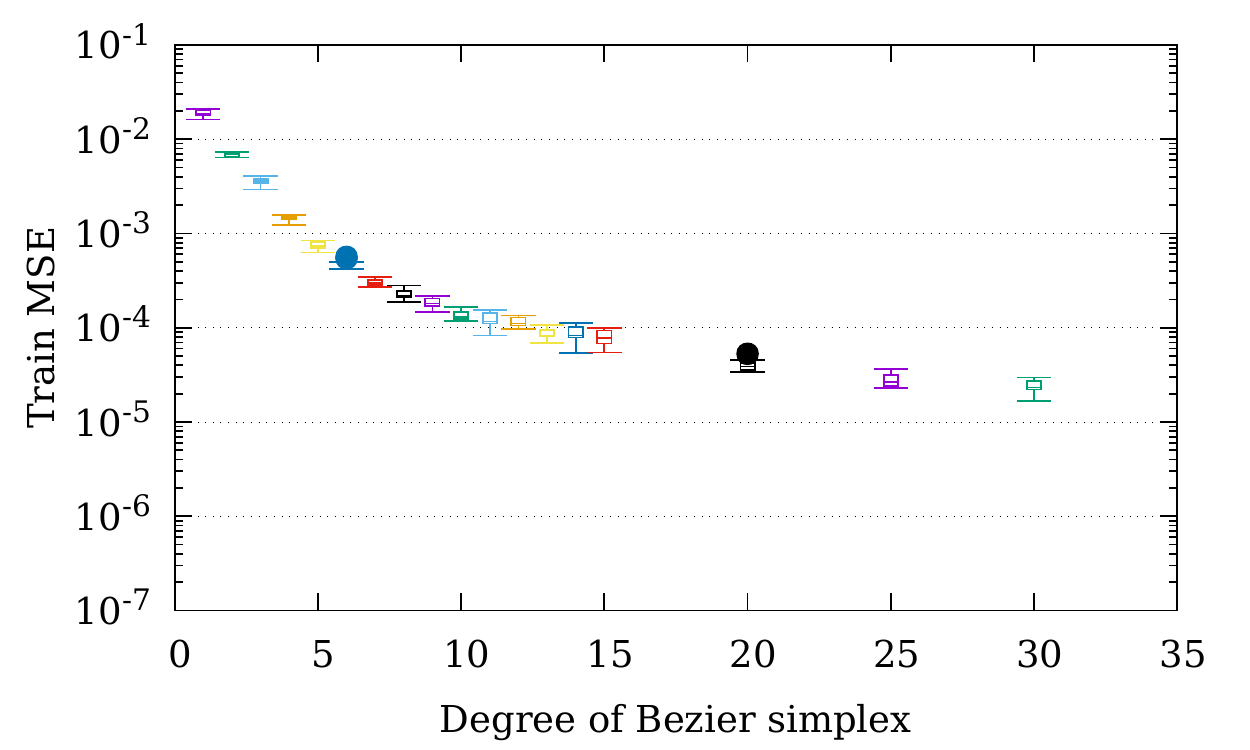}
        \includegraphics[width=.4\textwidth]{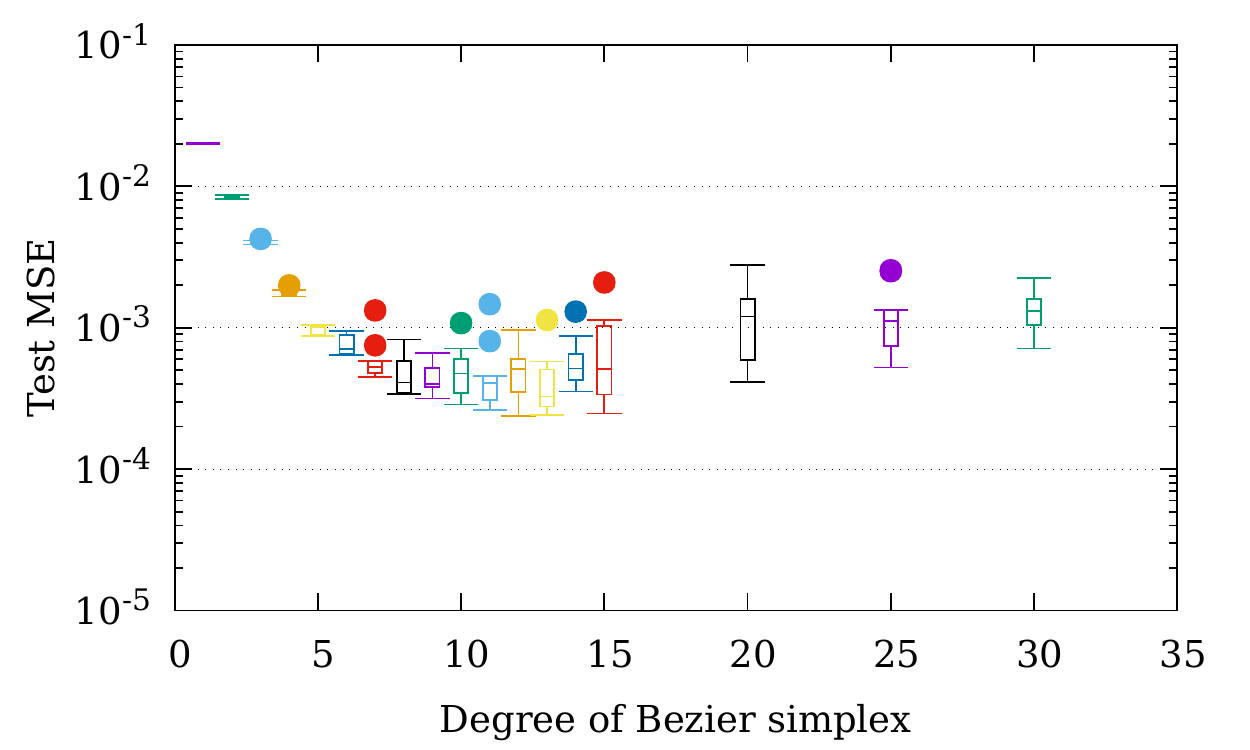}
    }\\
    \subfloat[$\text{Train}:\text{Test} = 515:4636$\label{fig:qsar-010}]{
        \includegraphics[width=.4\textwidth]{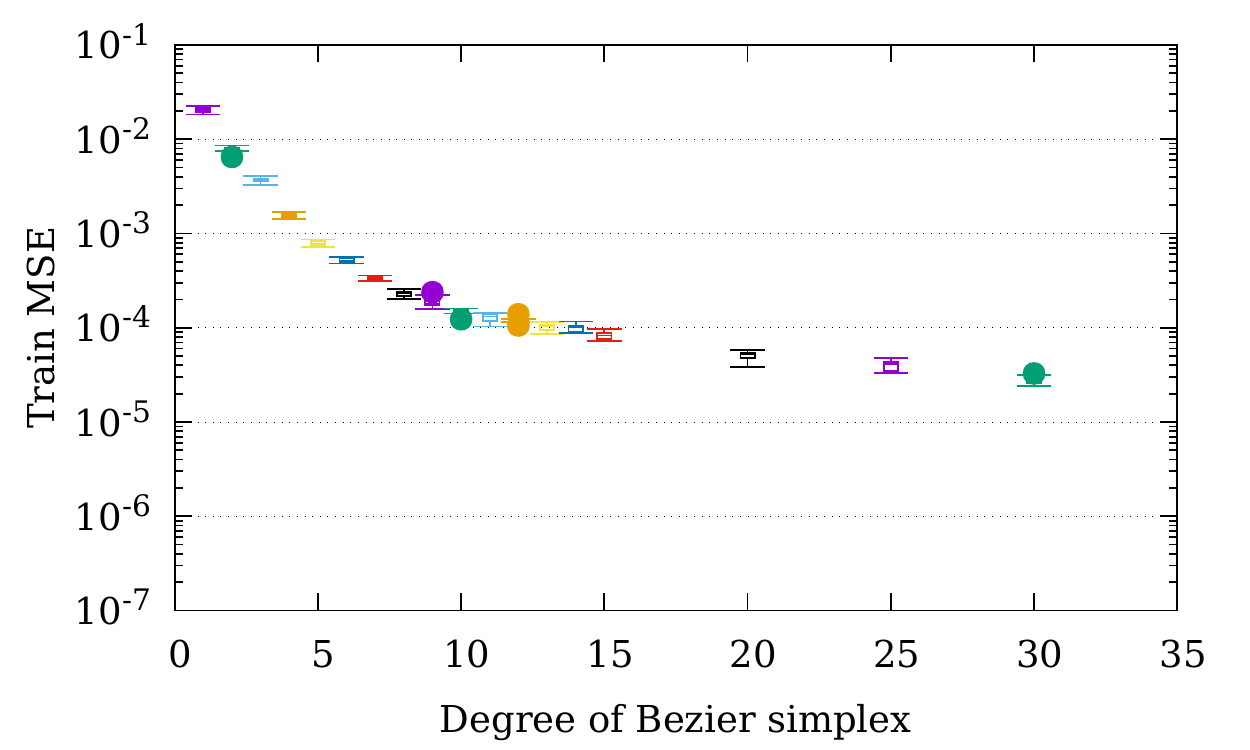}
        \includegraphics[width=.4\textwidth]{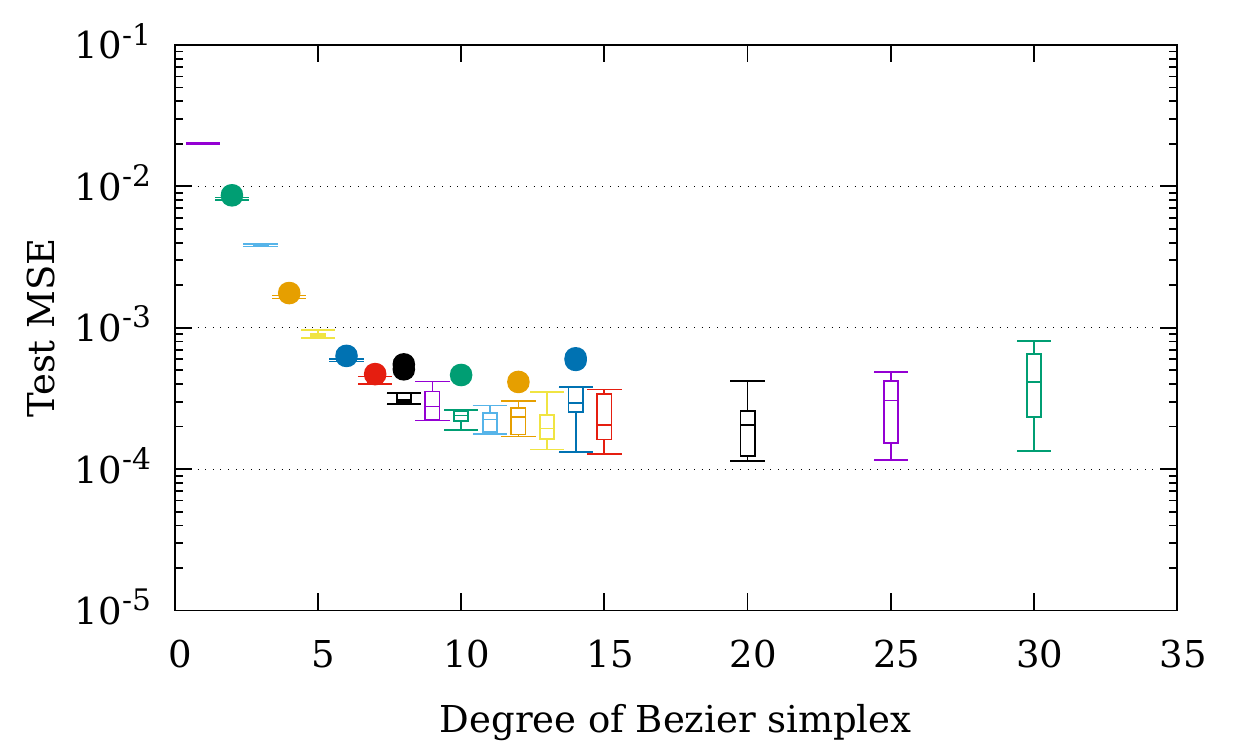}
    }\\
    \subfloat[$\text{Train}:\text{Test} = 2575:2576$\label{fig:qsar-050}]{
        \includegraphics[width=.4\textwidth]{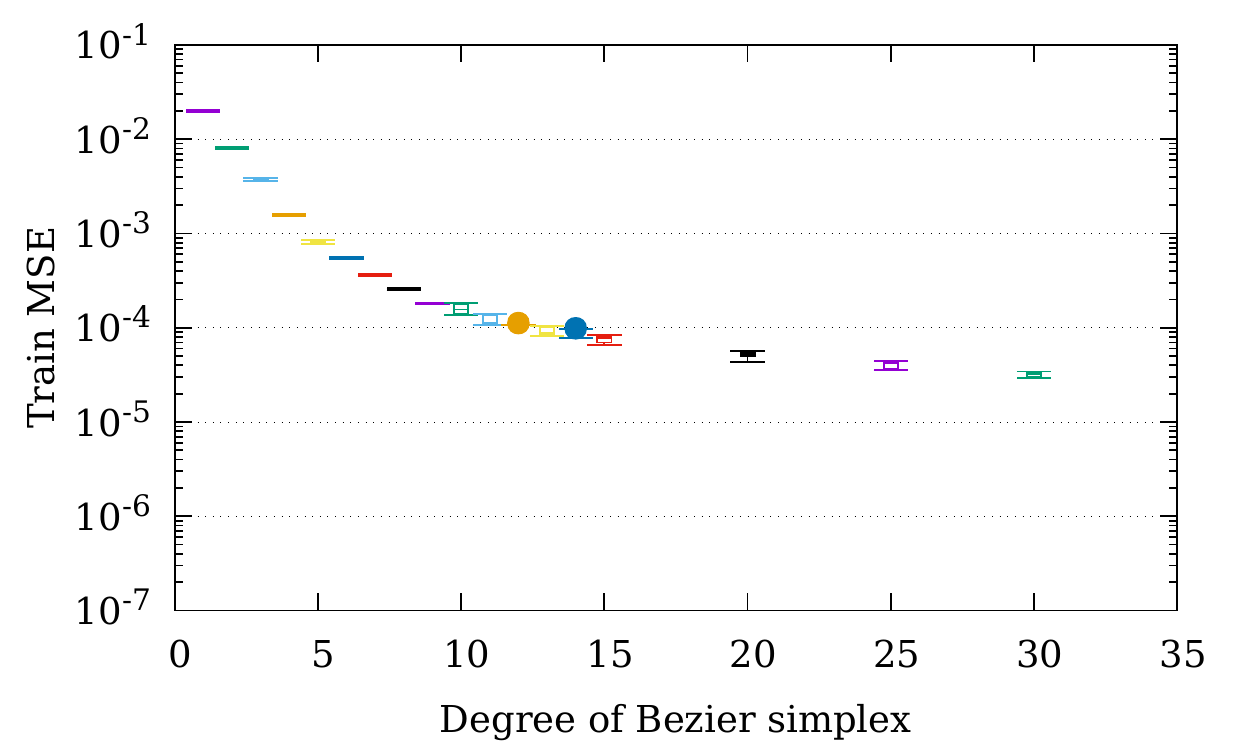}
        \includegraphics[width=.4\textwidth]{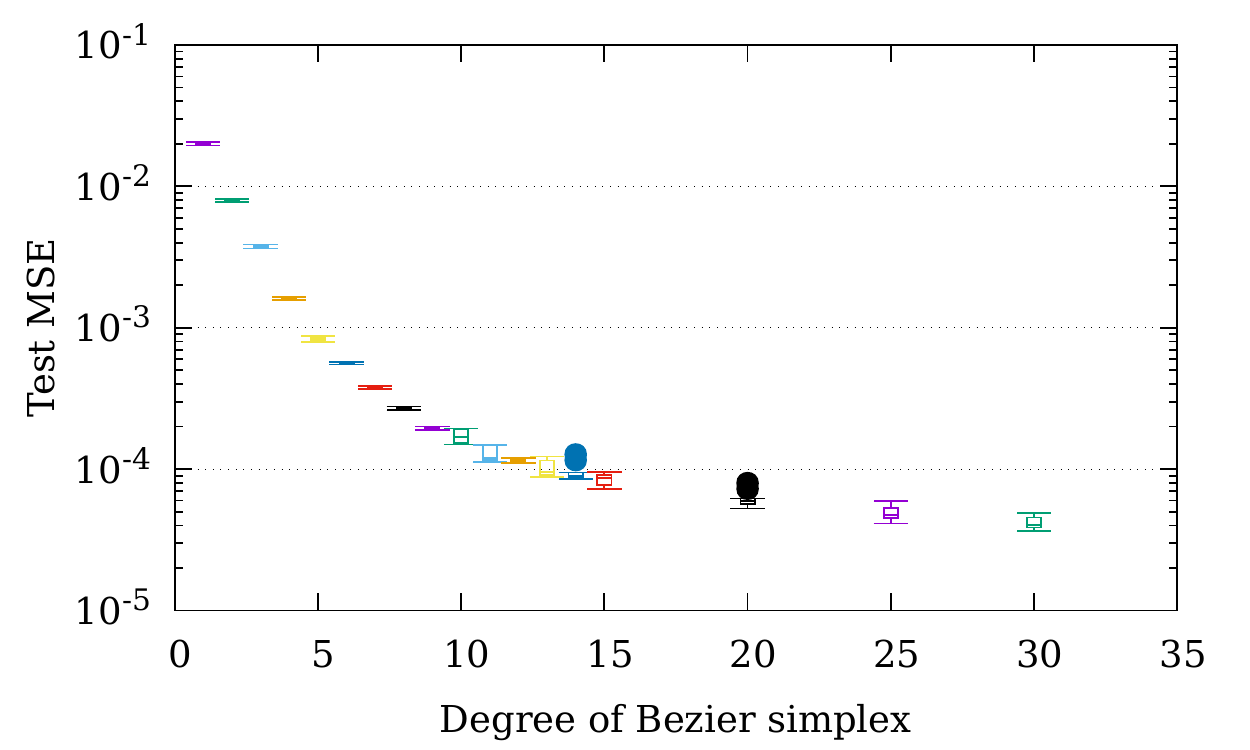}
    }\\
    \subfloat[$\text{Train}:\text{Test} = 5100:51$\label{fig:qsar-099}]{
        \includegraphics[width=.4\textwidth]{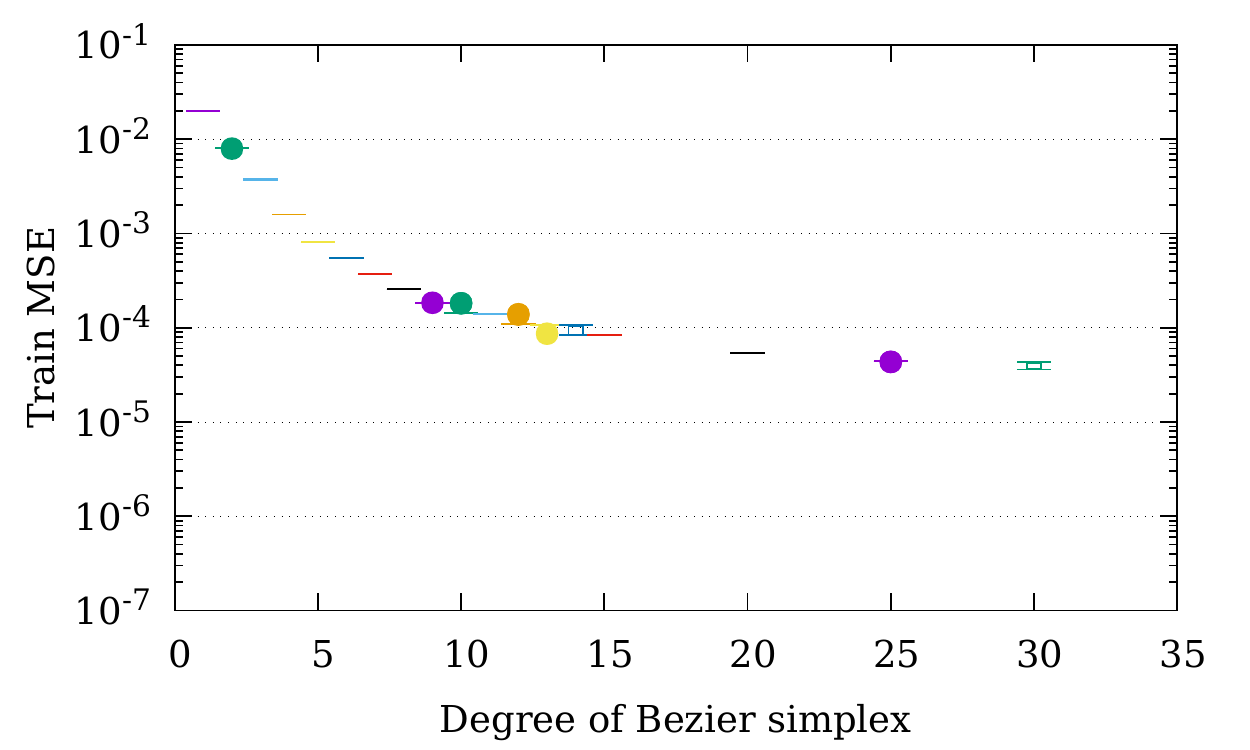}
        \includegraphics[width=.4\textwidth]{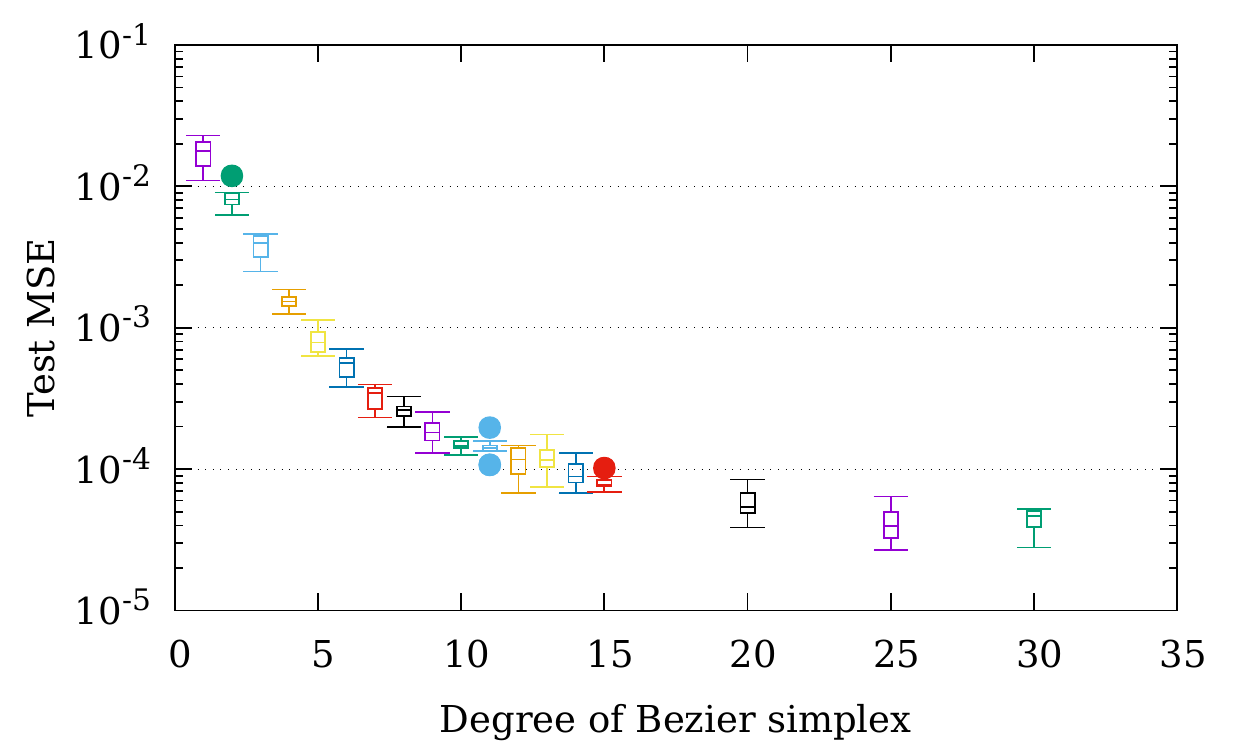}
    }
    \caption{Degree versus MSE on QSAR Fish Toxicity.}\label{fig:qsar-err}
\end{figure}
\Cref{fig:qsar-err} shows how train and test MSEs change as the degree increases in fitting a B\'ezier simplex for various train-test split ratios of QSAR Fish Toxicity dataset.
When the ratio is $51:5100$ (\cref{fig:qsar-001}), degree $d=4$ is optimal to minimize the average test MSE.
In case of the ratio being 257:4894 (\cref{fig:qsar-005}), the optimal degree becomes $d=10$, and its average test MSE is lower than that of the ratio being $51:5100$ and $d=4$.
As the ratio goes higher (\cref{fig:qsar-010,fig:qsar-050,fig:qsar-099}), the optimal degree keeps increasing, and its average test MSE consistently decreases.
This result agrees with the approximation theorem of B\'ezier simplices (\cref{thm:approx}).

\begin{figure}[tpb]
    \subfloat[Ground truth (5151 elastic net models trained with varying hyper-parameters).\label{fig:scatter-gt}]{
        \includegraphics[width=.33\textwidth]{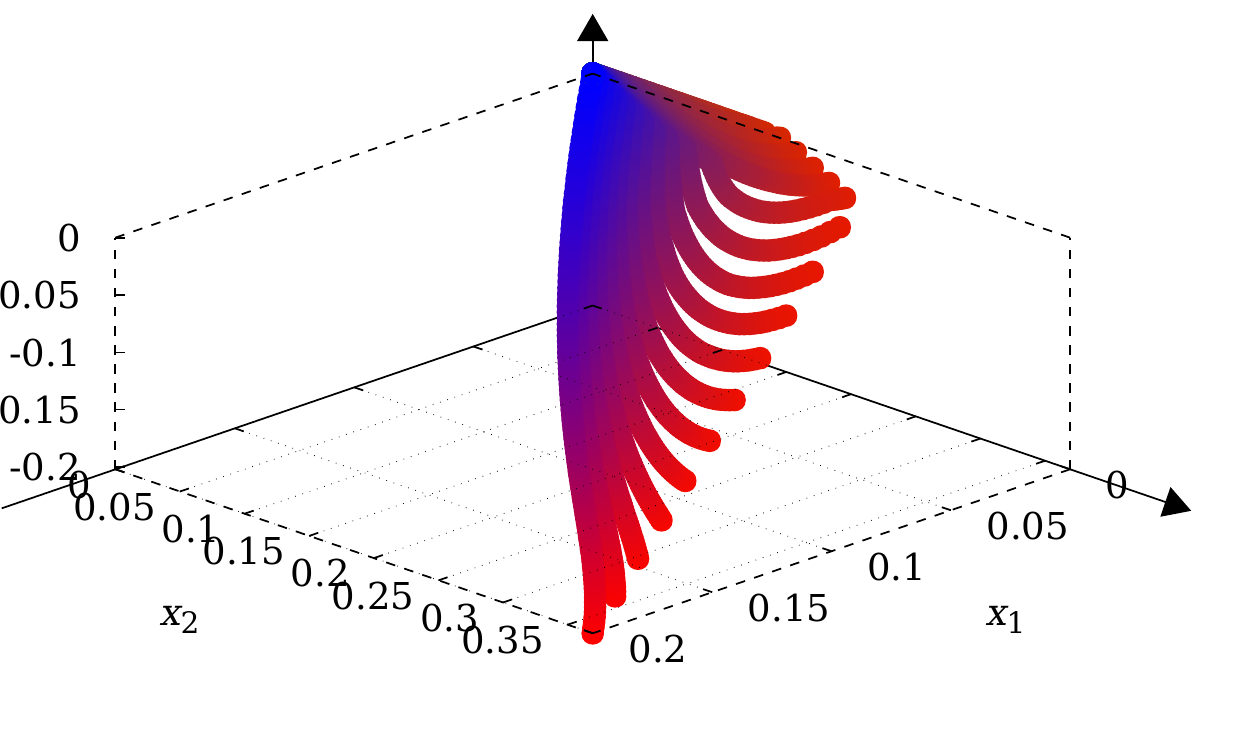}
        \includegraphics[width=.33\textwidth]{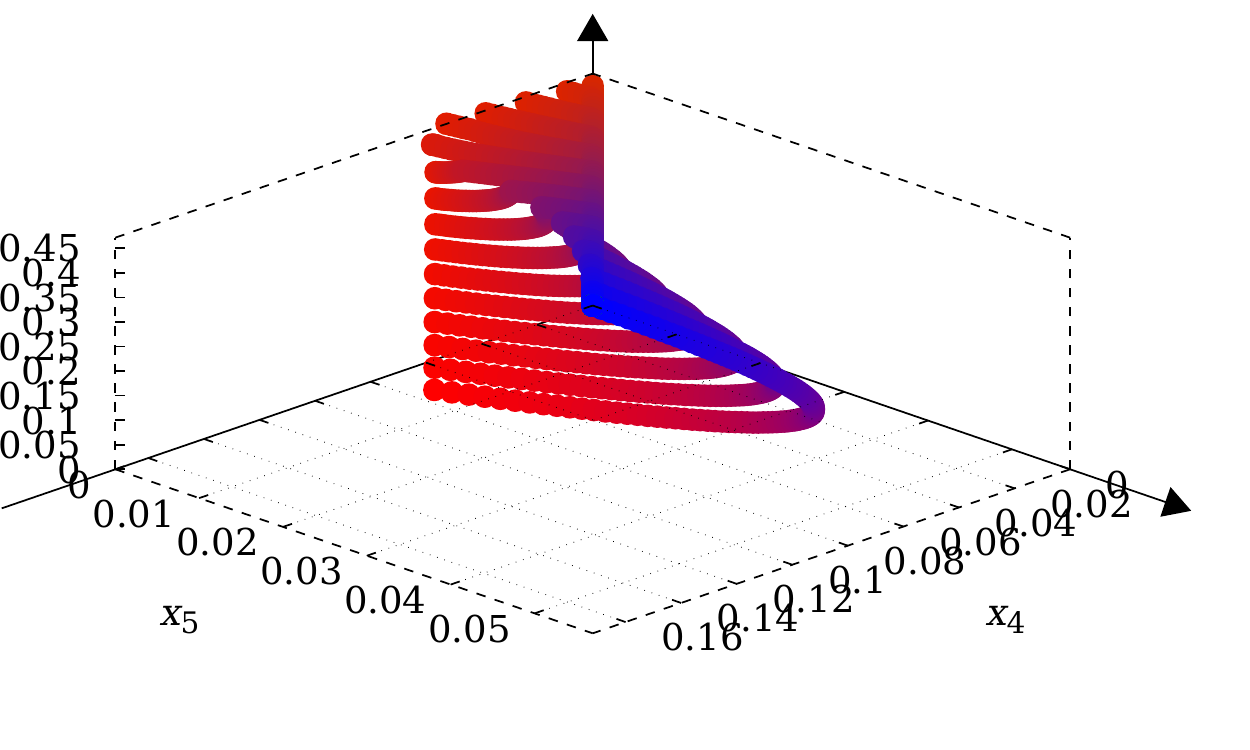}
        \includegraphics[width=.33\textwidth]{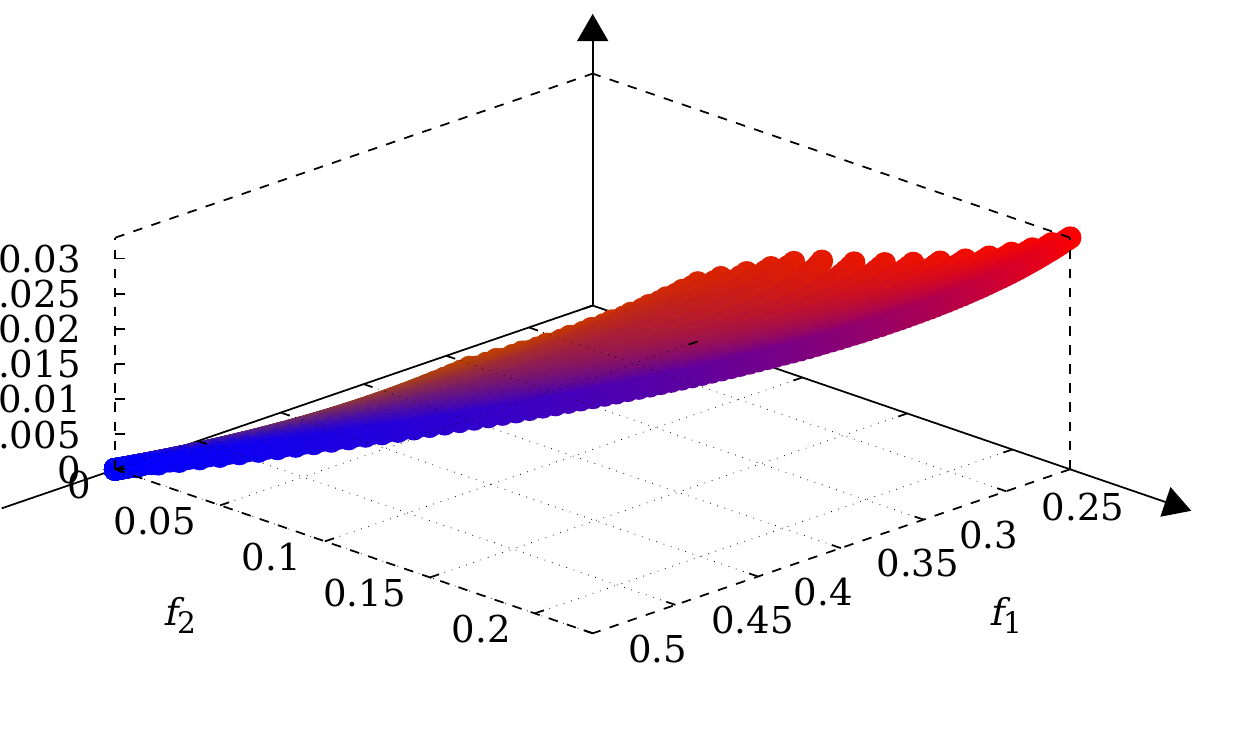}
    }\\
    \subfloat[Large sample approximation (A B\'ezier simplex of $d=25$ trained with 5100 data points).\label{fig:scatter-ls}]{
        \includegraphics[width=.33\textwidth]{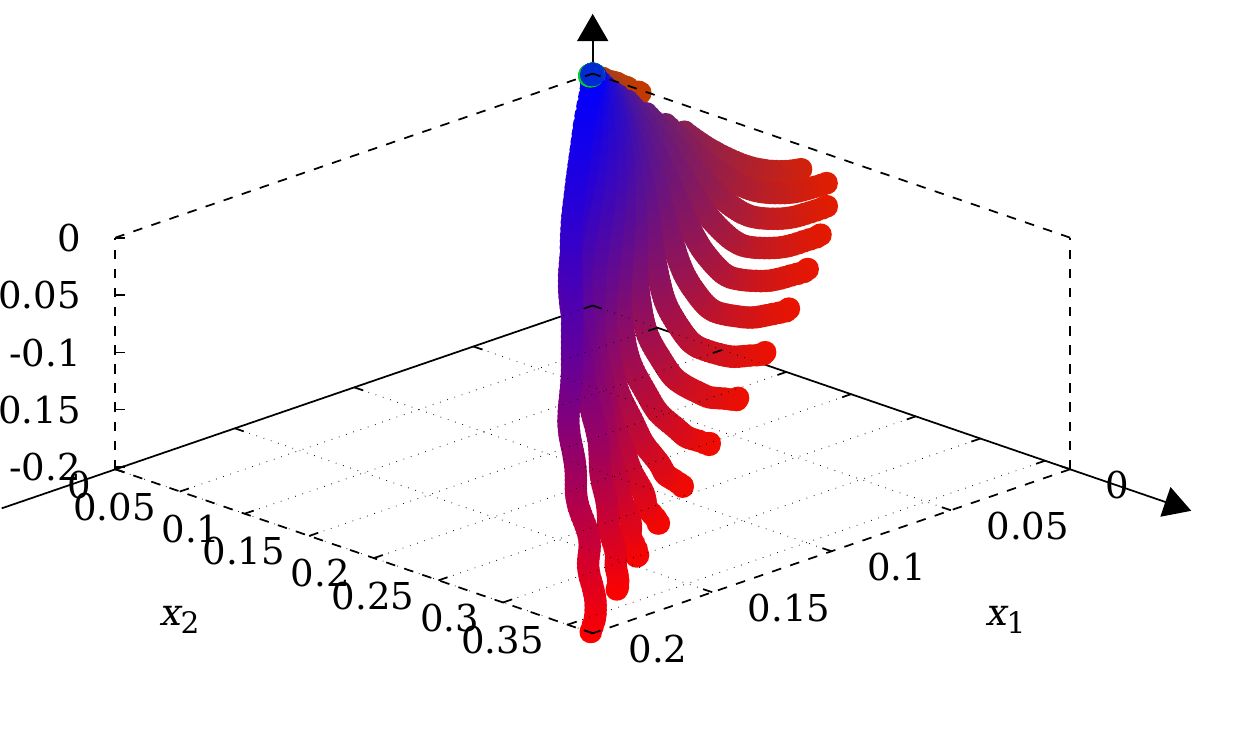}
        \includegraphics[width=.33\textwidth]{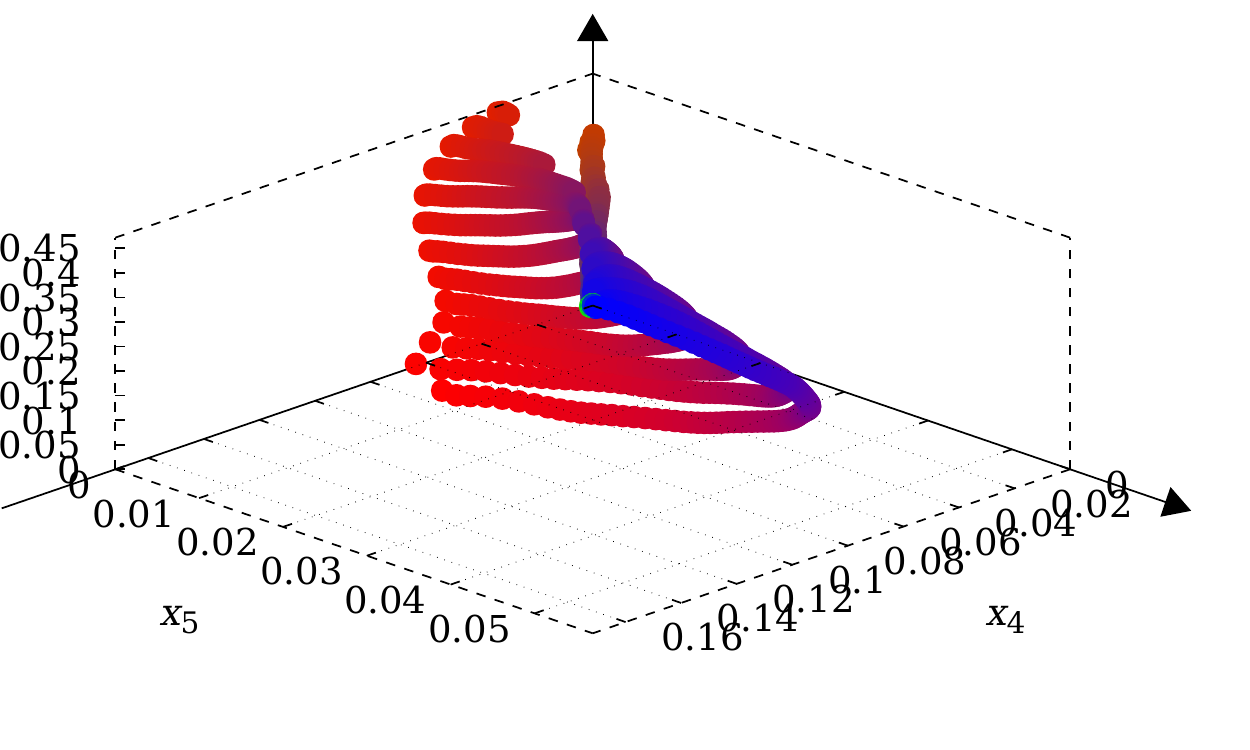}
        \includegraphics[width=.33\textwidth]{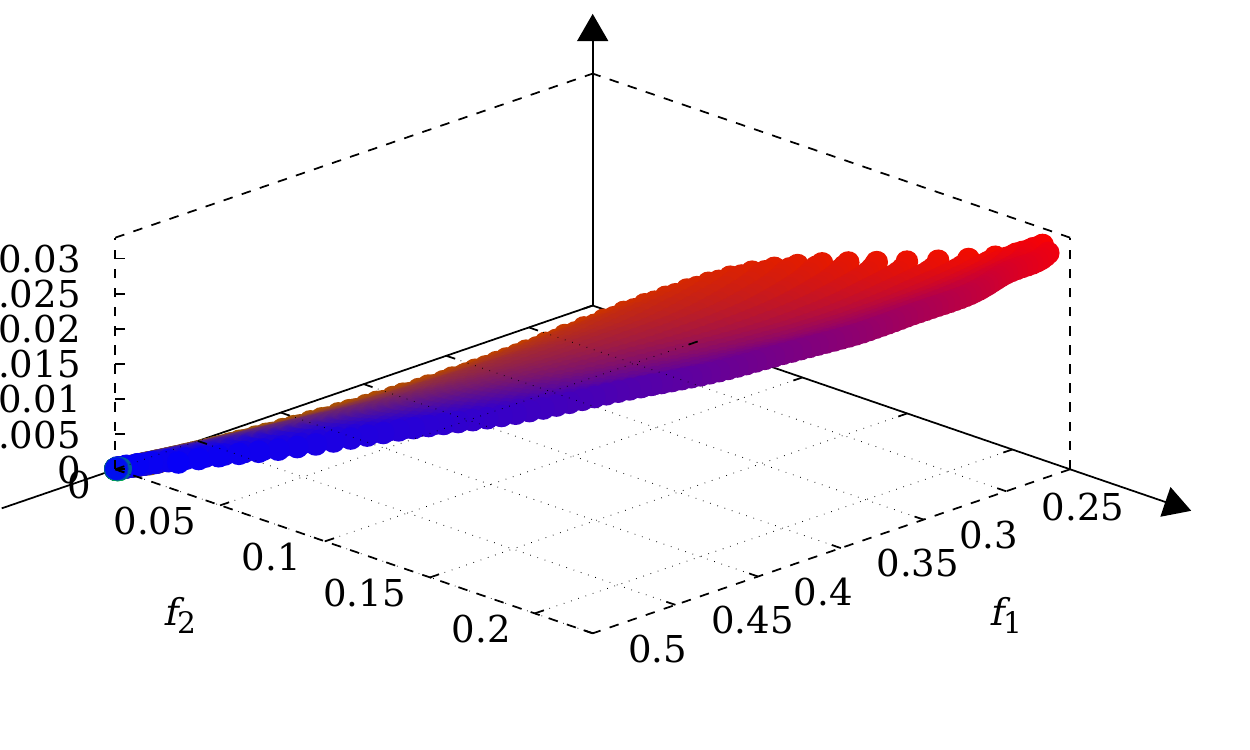}
    }\\
    \subfloat[Small sample approximation (A B\'ezier simplex of $d=4$ trained with 51 data points).\label{fig:scatter-ss}]{
        \includegraphics[width=.33\textwidth]{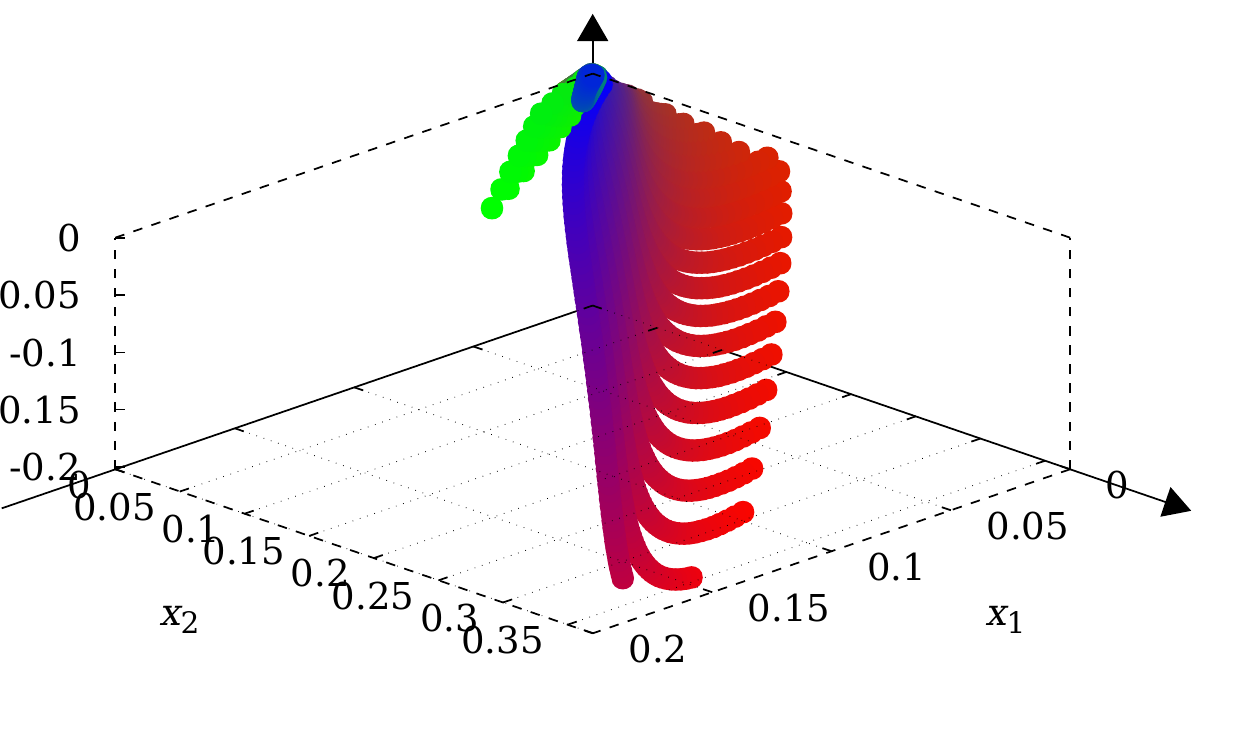}
        \includegraphics[width=.33\textwidth]{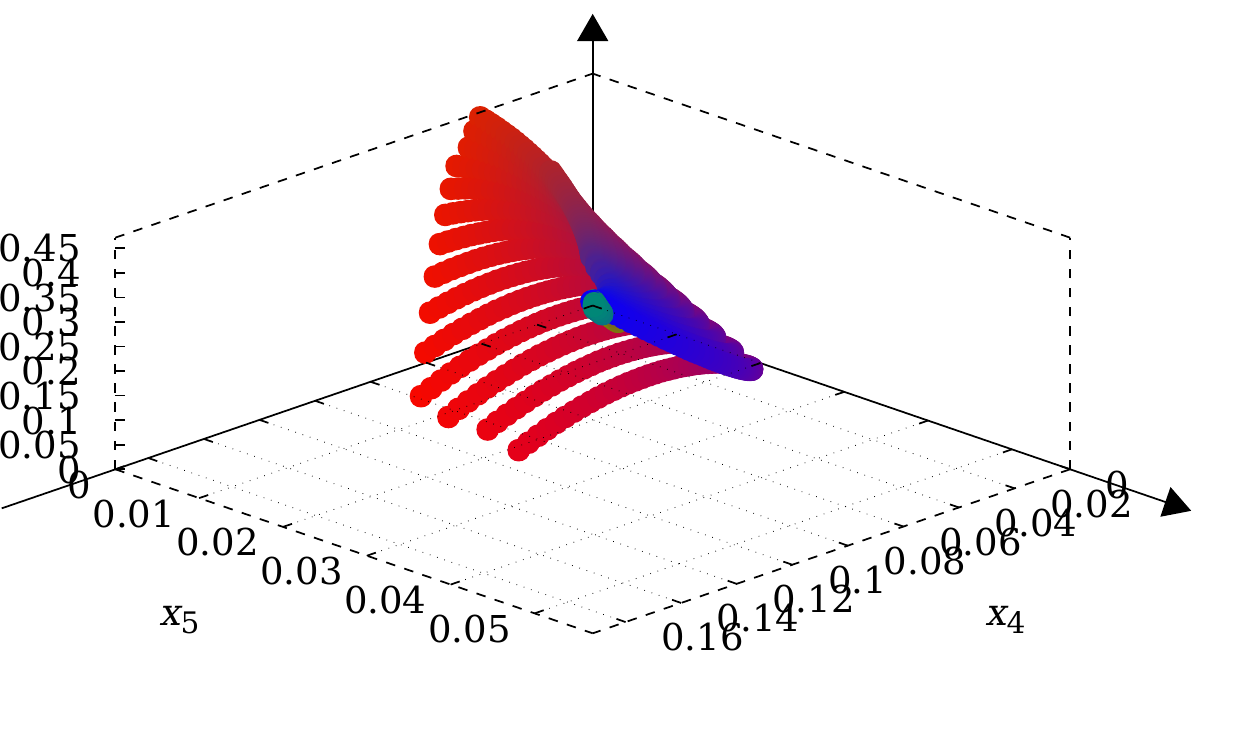}
        \includegraphics[width=.33\textwidth]{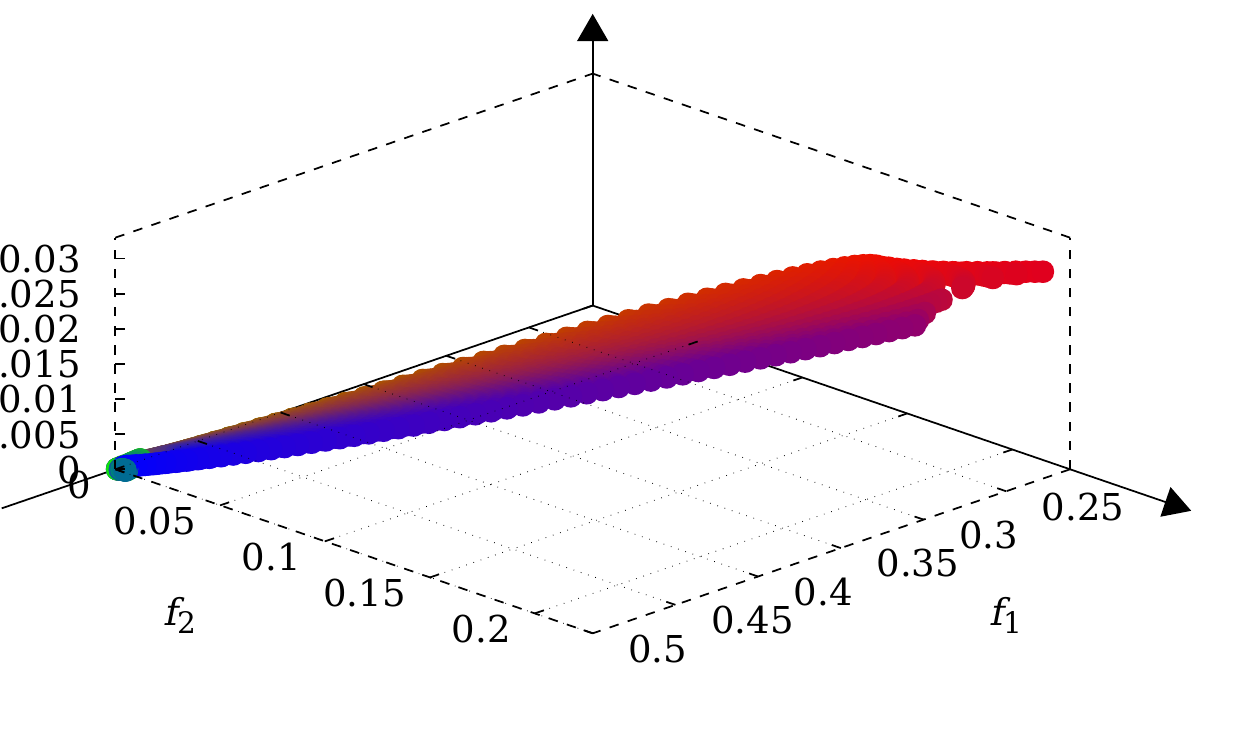}
    }
    \caption{QSAR Fish Toxicity (left: $(\theta^*_1,\theta^*_2,\theta^*_3)(W)$, mid: $(\theta^*_4,\theta^*_5,\theta^*_6)(W)$, right: $\tilde f \circ \theta^*(W)$).}\label{fig:scatter}
\end{figure}
Next, let us visually check the quality of approximation.
\Cref{fig:scatter} compares the ground truth and two approximations, one built from a large sample (train-test split is $5100:51$, and degree is $d=25$) and the other made from a small sample (train-test split is $51:5100$, and degree is $d=4$).
In the figure, each point is colored with its corresponding weight $w$ by converting $(w_1, w_2, w_3)$-coordinates to RGB values.
The color changes continuously on each surface in the ground truth  (\cref{fig:scatter-gt}), confirming \cref{thm:contisurj} that states the mapping $\theta^*$ is continuous (and thus so is $\tilde f \circ \theta^*$).
The large sample approximation (\Cref{fig:scatter-ls}) is close to the ground truth, which means all the elastic net models trained with different hyper-parameters are represented by a B\'ezier simplex as well as their loss values.
It is surprising that even with a small sample (\cref{fig:scatter-ss}), the Pareto front is still well-approximated, which enables us to conduct trade-off analysis on the approximate Pareto front for exploring the preferred hyper-parameter.

\begin{table}[t]
    \centering
    \caption{Optimal degree $d^*$ and its approximation error (average $\pm$ standard deviation over 10 trials).}
    \label{tab:errors}
    \begin{tabular}{ccccc}
    \toprule
                         & \multicolumn{2}{c}{Large sample} & \multicolumn{2}{c}{Small sample} \\ \cmidrule(lr){2-3} \cmidrule(lr){4-5}
    Dataset              & $d^*$ & Test MSE & $d^*$ & Test MSE\\
    \midrule
    Blog Feedback        & 30 & 5.21E-04 $\pm$ 4.28E-04 & 1 & 5.62E-03 $\pm$ 1.26E-04\\
    Fertility            & 30 & 4.71E-05 $\pm$ 1.34E-05 & 3 & 7.56E-03 $\pm$ 1.82E-03\\
    Forest Fires         & 30 & 5.52E-05 $\pm$ 3.08E-05 & 3 & 7.17E-03 $\pm$ 1.11E-03\\
    QSAR Fish Toxicity   & 25 & 4.16E-05 $\pm$ 1.09E-05 & 4 & 3.66E-03 $\pm$ 1.41E-03\\
    Residential Building & 25 & 3.55E-04 $\pm$ 2.55E-04 & 3 & 6.94E-03 $\pm$ 7.20E-04\\
    Slice Localization   & 30 & 5.95E-04 $\pm$ 4.38E-04 & 3 & 8.83E-03 $\pm$ 1.60E-03\\
    Wine                 & 30 & 6.71E-05 $\pm$ 1.42E-05 & 3 & 7.00E-03 $\pm$ 5.63E-04\\
    Yacht Hydrodynamics  & 30 & 6.75E-05 $\pm$ 4.32E-05 & 3 & 3.51E-03 $\pm$ 3.62E-04\\
    \bottomrule
    \end{tabular}
\end{table}
Approximation errors for all datasets are shown in \cref{tab:errors}.
In the large sample case (train-test split is $5100:51$), setting $d=30$ minimizes the test MSE for most of the datasets (except QSAR Fish Toxicity and Residential Building).
This indicates that, as the approximation theorem (\cref{thm:approx}) implies, the B\'ezier simplex can approximate the solution mapping of the elastic net no matter what dataset is given.
When the size of the training set is 51, setting $d=3$ minimizes the test MSE for most of the datasets (except Blog Feedback and QSAR Fish Toxicity).
This result implies that the optimal degree may be insensitive to what dataset the elastic net fits and mainly determined by the size of the training set for the B\'ezier simplex (i.e., the number of trained elastic net models).

\subsection{Discussion}\label{sec:discussion}
We have seen in \cref{fig:scatter} that to obtain a good approximation, the Pareto set requires a large sample and high degree while the Pareto front does a small sample and low degree.
This may be caused by the difference in the smoothness of the solution mapping.
Due to variable selection by the $L_1$-regularization in the elastic net, the mapping $\theta^*:\Delta^2\to X^*(\tilde f)$ has ``corners'' as shown in the middle plot of \cref{fig:scatter-gt}.
Nevertheless, as shown in the right plot of \cref{fig:scatter-gt}, such ``corners'' disappear in the mapping $\tilde f\circ\theta^*:\Delta^2\to\tilde f(X^*(\tilde f))$.
Similar features have been observed across all the datasets (their scatter plots are omitted due to space limitations).
We expect that the required sample size and degree would be decreased if one can subdivide the B\'ezier simplex at non-smooth points.
The development of such a subdivision algorithm is future work.

To find the best weight on the approximation, weights corresponding to typical hyper-parameters will help for comparison.
If we have some hyper-parameters in the original problem, then transformation from a hyper-parameter $(\mu, \lambda)$ to a weight $(w_1, w_2, w_3)$ gives such ``guiding'' weights.
For any $\mu, \lambda$ such that
\begin{equation}\label{eq:mu}
0 \leq \mu \leq \frac{\lambda - \varepsilon}{\varepsilon},
\end{equation}
the minimizer of the function $g_{\mu, \lambda}$ in \cref{eqn:elastic-net} is the point $\theta^*(w(\mu, \lambda))$, where $w(\mu, \lambda) = (w_1(\mu, \lambda), w_2(\mu, \lambda), w_3(\mu, \lambda))$ is defined by
\begin{equation}\label{eq:elastic-net-sop2mop}
    \begin{split}
        w_1(\mu, \lambda) &= \frac{1 + \varepsilon}{\lambda + \mu + 1},\\
        w_2(\mu, \lambda) &= \frac{(1 + \varepsilon) \mu}{\lambda + \mu + 1},\\
        w_3(\mu, \lambda) &= \frac{\lambda - \varepsilon (\mu + 1)}{\lambda + \mu + 1}.
    \end{split}
\end{equation}
Here, by \cref{eq:mu}, note that $\lambda + \mu + 1 \neq 0$ and $w(\mu, \lambda) \in \Delta^2 \setminus \Delta^2_{\set{2, 3}}$ (see \cref{sec:derivation}, which also contains the derivation of \cref{eq:mu,eq:elastic-net-sop2mop}).

\subsection{Derivation of \texorpdfstring{\cref{eq:mu,eq:elastic-net-sop2mop}}{(7.9) and (7.10)}}\label{sec:derivation}
Let us first derive the equations in \cref{eq:elastic-net-sop2mop} that converts $(\mu, \lambda)$ to $(w_1, w_2, w_3)$, which is true for some superset of $\Delta^2 \setminus \Delta^2_{\set{2, 3}}$.
By $\mu = w_2 / w_1$ in \cref{eq:elastic-net-mop2sop} and $w_1 + w_2 + w_3 = 1$, we have
\begin{equation}\label{eq:w3-1}
\begin{split}
     w_3 &= 1 - w_1 - w_2\\
         &= 1 - w_1 - \mu w_1\\
         &= 1 - (\mu + 1) w_1.
\end{split}
\end{equation}
By $\lambda = (w_3 + \varepsilon) / w_1$ in \cref{eq:elastic-net-mop2sop}, we have
\begin{equation}\label{eq:w3-2}
    w_3 = \lambda w_1 - \varepsilon.
\end{equation}
Combining \cref{eq:w3-1,eq:w3-2}, we have
\begin{align*}
    1 - (\mu + 1) w_1 &= \lambda w_1 - \varepsilon\\
    \iff (\lambda + \mu + 1) w_1 &= 1 + \varepsilon.
\end{align*}
Since $\mu, \lambda \geq 0$, we have $\lambda + \mu + 1 > 0$ and
\begin{equation}\label{eq:w1}
    w_1 = \frac{1 + \varepsilon}{\lambda + \mu + 1}.
\end{equation}
Then, we substitute \cref{eq:w1} to \cref{eq:w3-2} and obtain
\begin{equation}\label{eq:w3}
\begin{split}
    w_3 &= \lambda w_1 - \varepsilon\\
        &= \lambda \times \frac{1 + \varepsilon}{\lambda + \mu + 1} - \varepsilon\\
        &= \frac{\lambda (1 + \varepsilon) - \varepsilon (\lambda + \mu + 1)}{\lambda + \mu + 1}\\
        &= \frac{\lambda - \varepsilon \mu - \varepsilon}{\lambda + \mu + 1}.
\end{split}
\end{equation}
Finally, we have
\begin{align}
    w_2 &= 1 - w_1 - w_3\notag\\
        &= \frac{(\lambda + \mu + 1) - (1 + \varepsilon) - (\lambda - \varepsilon \mu - \varepsilon)}{\lambda + \mu + 1}\notag\\
        &= \frac{(1 + \varepsilon) \mu}{\lambda + \mu + 1}.\label{eq:w2}
\end{align}

Next, let us restrict the possible range of $\mu, \lambda$ to satisfy $w(\mu, \lambda) \in \Delta^2 \setminus \Delta^2_{\set{2, 3}}$, which derives \cref{eq:mu}.
By $0 < w_1 \leq 1$, it follows from \cref{eq:w1} that
\[
    1 + \varepsilon \leq \lambda + \mu + 1,
\]
which can be simplified to
\begin{equation}\label{eq:mu1}
    \varepsilon - \lambda \leq \mu.
\end{equation}
By $0 \leq w_2 \leq 1$, it follows from \cref{eq:w2} that
\[
    (1 + \varepsilon) \mu \leq \lambda + \mu + 1,
\]
which can be simplified to
\[
    \varepsilon \mu \leq \lambda + 1.
\]
Since $\varepsilon > 0$, we have
\begin{equation}\label{eq:mu2}
    \mu \leq \frac{\lambda + 1}{\varepsilon}.
\end{equation}
By $0 \leq w_3 \leq 1$, it follows from \cref{eq:w3} that
\[
    0 \leq \lambda - \varepsilon \mu - \varepsilon \leq \lambda + \mu + 1.
\]
It can be simplified to
\begin{equation}\label{eq:mu3}
    -1 \leq \mu \leq \frac{\lambda - \varepsilon}{\varepsilon}.
\end{equation}
By \cref{eq:mu1,eq:mu2,eq:mu3} and $\mu \geq 0$, we have
\[
    0 \leq \mu \leq \frac{\lambda - \varepsilon}{\varepsilon}.
\]

\section{Conclusions}\label{sec:conclusions}
In this paper, we have shown that all unconstrained strongly convex problems are weakly simplicial.
As an engineering application of the main theorem, we have reformulated the elastic net into a multi-objective strongly convex problem and approximated its solution mapping by a B\'ezier simplex.
Such an approximation enables us to explore the hyper-parameter space of the elastic net within reasonable computation resources.

In future work, we plan to study what subclass of strongly convex problems are simplicial.
While the question has been resolved for $C^1$ mappings \cite{Hamada2019b},
removing its differentiability assumption will bring us to a completely different stage where new mathematics for optimization may be waiting for discovery.

\section*{Acknowledgements}
The authors are grateful to \mbox{Kenta~Hayano}, \mbox{Yutaro~Kabata} and \mbox{Hiroshi~Teramoto} for their kind comments.
\mbox{Shunsuke~Ichiki} was supported by JSPS KAKENHI Grant Numbers JP21K13786 and JP17H06128.
This work is based on the discussions at 2018 IMI Joint Use Research Program, Short-term Joint Research ``Multiobjective optimization and singularity theory: Classification of Pareto point singularities'' in Kyushu University.
This work was also supported by the Research Institute for Mathematical Sciences, a Joint Usage/Research Center located in Kyoto University.
The computation was carried out using the computer resource offered under the category of Intensively Promoted Projects by Research Institute for Information Technology, Kyushu University.

\bibliographystyle{plain}
\bibliography{main}
\end{document}